\newtheorem{definition}{Definition}[section]
\newtheorem{theorem}{Theorem}[section]
\newtheorem{lemma}{Lemma}[section]
\newtheorem{prop}{Proposition}[section]
\newtheorem{remark}{Remark}[section]
\numberwithin{equation}{section}
\begin{document}
\title[Optimal $H_{\infty}$ control]{Optimal $H_{\infty}$ control based on stable manifold of discounted Hamilton-Jacobi-Isaacs equation}
\author[G. Chen]{Guoyuan Chen}
\address{\noindent
School of Data Sciences, Zhejiang University of Finance \& Economics, Hangzhou 310018, Zhejiang, P. R. China}
\email{gychen@zufe.edu.cn}

\author[Y. Wang]{Yi Wang}
\address{\noindent
School of Data Sciences, Zhejiang University of Finance \& Economics, Hangzhou 310018, Zhejiang, P. R. China}
\email{wangyihzh@gmail.com}

\author[Q. Zhou]{Qinglong Zhou}
\address{\noindent
School of Mathematical Science, Zhejiang University, Hangzhou 310058, Zhejiang, P. R. China}
\email{zhouqinglong@zju.edu.cn}

\begin{abstract}
The optimal \(H_{\infty}\) control problem over an infinite time horizon, which incorporates a performance function with a discount factor \(e^{-\alpha t}\) (\(\alpha > 0\)), is important in various fields. Solving this optimal \(H_{\infty}\) control problem is equivalent to addressing a discounted Hamilton-Jacobi-Isaacs (HJI) partial differential equation. In this paper, we first provide a precise estimate for the discount factor \(\alpha\) that ensures the existence of a nonnegative stabilizing solution to the HJI equation. This stabilizing solution corresponds to the stable manifold of the characteristic system of the HJI equation, which is a contact Hamiltonian system due to the presence of the discount factor. Secondly, we demonstrate that approximating the optimal controller in a natural manner results in a closed-loop system with a finite \(L_2\)-gain that is nearly less than the gain of the original system. Thirdly, based on the theoretical results obtained, we propose a deep learning algorithm to approximate the optimal controller using the stable manifold of the contact Hamiltonian system associated with the HJI equation. Finally, we apply our method to the \(H_{\infty}\) control of the Allen-Cahn equation to illustrate its effectiveness.
\end{abstract}
\maketitle

\begin{flushright}
\xxivtime \, \today
\end{flushright}

\section{Introduction}

\subsection{Background}
Robust feedback control is a control system design approach aimed at ensuring the stability and performance of a system amid uncertainties and variations. It involves the development of control functions capable of addressing changes in system dynamics, disturbances, parameter variations, and model uncertainties. In this paper, we focus on \(H_{\infty}\) control, which has been widely utilized to mitigate the impact of disturbances on system performance, as discussed in sources such as \cite{zames1981feedback, doyle1989state, van19922, van2000l2, bacsar2008h}. Our primary concern is the optimal $H_{\infty}$ control problem over a continuous infinite time horizon, with a performance function that incorporates a discount factor \(e^{-\alpha t}\) for some \(\alpha> 0\). This discount factor indicates that future costs are deemed less significant than equivalent costs incurred at the present time. Such discounted performance functions are commonly applied in various domains, including optimal control \cite{postoyan2016stability}, dynamic programming and reinforcement learning \cite{bertsekas2019reinforcement,gaitsgory2015stabilization} as well as in robust control problems \cite{modares2015h}. In more complex scenarios, like infinite-dimensional control problems over an infinite time horizon, the inclusion of a discount factor in the performance function may naturally result in a differentiable control-to-state mapping in appropriately chosen spaces, as analyzed recently by Casas and Kunisch \cite{casas2023infinite, casas2023infinite2}.

It is well known that finding the optimal input for $H_{\infty}$ control for continuous-time systems is equivalent to solving an associated Hamilton-Jacobi-Isaacs (HJI) equation, which is a specific type of Hamilton-Jacobi (HJ) equation. In general, HJ equations do not have analytical solutions, making the search for numerical solutions critically important.
A significant amount of research has been devoted to numerically solving various HJ equations (see, for example, \cite{cacace2012patchy, darbon2016algorithms, kalise2018polynomial, jiang2016using, beard1997galerkin, mceneaney2007curse, ohtsuka2010solutions, kalise2020robust}). However, as noted by \cite{nakamura2019adaptive}, these existing methods may face several limitations, including high computational costs for high-dimensional systems, difficulties in estimating solution accuracy for general systems, solutions that are restricted to a small neighborhood around a fixed point or nominal trajectory, and the requirement for specific structures in the nonlinear terms of the system.

Recently, various deep learning methods have been developed to efficiently solve HJ equations in high-dimensional domains using grid-free sampling. For instance, \cite{sirignano2018dgm} utilizes neural networks (NNs) to approximate HJ equation solutions by minimizing PDE and boundary condition residuals at randomly sampled collocation points. \cite{han2018solving} and \cite{raissi2018forward} reformulate PDEs, including HJB equations, as stochastic differential equations for deep learning solutions. Additionally, \cite{kunisch2021semiglobal} provides a theoretical framework for using NNs to approximate optimal feedback laws in functional spaces. Furthermore, \cite{nakamura2019adaptive} and \cite{kang2019algorithms} introduce a causality-free, data-driven deep learning algorithm for solving HJ equations, enhancing NN training through adaptive data generation. For a comprehensive review, see \cite{E2021algorithms}.

Unlike the direct solution of HJ equations as mentioned earlier, methods developed by \cite{van1991state} and \cite{sakamoto2008} address infinite horizon optimal control and \(H_\infty\) control problems by finding stabilizing solutions to stationary HJ equations that correspond to the stable manifolds of the characteristic Hamiltonian systems at specific equilibria. Once the stable manifold is obtained, the optimal control can be directly represented by the stable manifold (\cite{sakamoto2008}), bypassing the need for the gradient of the HJB equation solution. This approach can be viewed as a natural extension of LQ theory to nonlinear systems.

Computing stable manifolds for HJ equations is generally challenging. \cite{sakamoto2008} proposed an iterative procedure to approximate the characteristic Hamiltonian system's trajectories on the stable manifold, yielding promising feedback controls for certain initial conditions or near nominal trajectories (\cite{sakamoto2013case,horibe2017optimal, horibe2018nonlinear}). However, this method can be time-consuming for more general initial conditions or high-dimensional systems.
To address these issues, \cite{chen2020symplectic} introduced a symplectic algorithm to enlarge the effective domain. Despite the method's success in specific applications, theoretical results on the conditions under which the approximate feedback control is effective are limited. \cite{chen2020deep} provided a proof that under natural conditions on approximation errors, the derived controller is nearly optimal and stabilizes the system. Additionally, \cite{chen2020deep} introduced a grid-free deep learning algorithm capable of handling high-dimensional problems to find such an approximate stable manifold inspired by \cite{nakamura2019adaptive}.

\subsection{Main contributions}
In this paper, we aim to identify the optimal $H_{\infty}$ control for the robust control problem over an infinite horizon with a discounted performance function. This challenge corresponds to solving a discounted HJI equation. As demonstrated in \cite{sakamoto2008}, the optimal input can be derived from the stable manifold within a neighborhood of the equilibrium. However, such kind of optimal $H_{\infty}$ control problem presents individual difficulties compared to just optimal control. For instance, what is the appropriate range for the discount factor to ensure the existence of a stable manifold that can generate the optimal $H_{\infty}$ control? What conditions should be satisfied to ensure that the approximation of the stable manifold retains finiteness of the $L_2$-gain as the exact one? Our primary contributions include addressing these fundamental questions.

\begin{enumerate}
  \item [(1)]
      As demonstrated in \cite{modares2015h}, estimating the discount factor \(\alpha\) is crucial for ensuring the existence of a nonnegative solution to the discounted Hamilton-Jacobi-Isaac (HJI) equation. In this paper, we estimate the discount factor \(\alpha\) through a comprehensive analysis of the linearized robust control system around its equilibrium point. Specifically, we compute a bound for \(\alpha\) to ensure that the Hamiltonian matrix at the equilibrium is hyperbolic, which implies that the generalized algebraic Riccati equation (GARE) \eqref{e:cHM} for the linearized robust control system has a nonnegative definite solution under natural controllability conditions (see Conditions \(\mathbf{(C_1)-(C_2)}\) below). Furthermore, with this bound for \(\alpha\), we demonstrate the existence of a nonnegative stabilizing solution to the discounted HJI equation (see Theorem \ref{t:stable} below). The bounds on \(\alpha\) obtained through this method are nearly necessary and sufficient within a semiglobal region surrounding the equilibrium, making them more precise than those presented in \cite{modares2015h}.

  \item [(2)]
      In traditional literature, it is generally accepted that feedback control approximations perform well. However, as noted in \cite{nakamura2022neural}, the `equilibrium' of the closed-loop system resulting from such approximations may become unstable or even vanish as time approaches infinity. Recently, for optimal control problems, \cite{chen2020deep} theoretically demonstrated that approximating the stable manifold is effective under certain natural conditions. However, the optimal \(H_{\infty}\) control problem is more complex because it involves estimating the \(L_2\)-gain. In this work, we show that an approximation of the stable manifold with an error \(\varepsilon > 0\) in a certain natural sense can produce a feedback control that ensures the closed-loop system maintains a finite \(L_2\)-gain of less than \(\gamma + O(\varepsilon)\). See Theorem \ref{t:finite-l2-gain} below.

  \item [(3)]
      In contrast to the HJB equation (see, e.g., \cite{van1991state, sakamoto2008, chen2020symplectic, chen2020deep}), the characteristic system of the discounted HJI equation (refer to Equation \eqref{e:HJI} below) is a contact Hamiltonian system with a state dimension of \(2n + 1\) (see Equation \eqref{e:charact-syst2} below) \cite{arnol2013mathematical} . For the specific type of contact Hamiltonian system described by \eqref{e:charact-syst2}, we can solve the first two equations and then address the third equation based on the solutions of the first two. To find trajectories for the first two equations in \eqref{e:charact-syst2}, we solve the problem near the equilibrium using a two-point boundary value problem (BVP). This local solution is then extended by solving an initial value problem (IVP). While this procedure is similar to the approach in \cite{chen2020deep}, we need more analysis to obtain the existence of the two-point BVP through a coordinate transformation inspired by \cite{sakamoto2008}.

\end{enumerate}

Additionally, based on the obtained theoretical results, we propose an adaptive deep learning algorithm that relies on randomly generated data and can be applied to high-dimensional problems, as discussed in \cite{chen2020deep}. Specifically, the training data are generated by numerically solving both the two-point BVP and the IVP mentioned earlier (see Section \ref{s:algorithm} below). We emphasize that, in addition to controlling the numerical error of the two-point BVP and IVP, error detection can be enhanced by checking the contact Hamiltonian, as suggested in \cite{sakamoto2008}, given that the value of the contact Hamiltonian remains constant along the trajectories described by Equation \eqref{e:charact-syst2}. A deep neural network (NN) is trained on this data set, and adaptive sampling is applied to add more samples near points with higher error from the previous training round.

To illustrate the effectiveness of our approach, we apply it to the optimal $H_{\infty}$ control of the Allen-Cahn parabolic partial differential equation, which is of independent interest.

\subsection{Outline}
The paper is organized as follows:
In Section \ref{s:HJI}, we revisit some preliminaries related to the discounted HJI equation for optimal $H_{\infty}$ control.
Section \ref{s:stabilizing} is dedicated to proving some fundamental properties of the stabilizing solution and the stable manifold method.
In Section \ref{s:existence}, we prove the existence of a semiglobal nonnegative stabilizing solution for the discounted HJI equation.
Section \ref{s:approx} proves the finiteness of the $L_2$-gain based on the approximate optimal controller derived from the approximation of the stable manifold.
In Section \ref{s:algorithm}, we present a deep learning algorithm designed to find an approximate stable manifold that meets the theoretical conditions established in the previous sections.
Finally, in Section \ref{s:application}, we demonstrate the effectiveness of our method by applying it to the $H_{\infty}$ control of the Allen-Cahn equation.

\section{Discounted HJI equation for robust feedback control}\label{s:HJI}
In this section, we review basic notations for robust feedback control with discounted performance values.

Consider the following nonlinear system
\begin{eqnarray}\label{e:system}
\dot x=f(x)+g(x)u+k(x)d, \quad x\in \Omega,
\end{eqnarray}
where $\Omega\subset \mathbb R^n$ is a state domain containing origin, $u\in \mathbb R^m$ is the control input, $d\in \mathbb R^l$ denotes the external disturbance, $f:\Omega\to \mathbb R^n$, $g:\Omega\to \mathbb R^{n\times m}$, $k:\Omega\to \mathbb R^{n\times l}$ are smooth functions. We assume that $f(0)=0$.
Let the performance output be
\begin{eqnarray}\label{e:p-output}
z=\left[
    \begin{array}{c}
      x \\
      u \\
    \end{array}
  \right],
\end{eqnarray}
and let its norm be
\begin{eqnarray}\label{e:norm}
\|z\|^2=x^TQx+u^TWu,
\end{eqnarray}
where $Q\in \mathbb R^{n\times n}$ and $W\in \mathbb R^{m\times m}$ are two positive-definite matrices. Moreover, we define
$
\|d\|^2=d^TGd,
$
where $G\in \mathbb R^{l\times l} $ is a positive-definite matrix.
\begin{remark}
In general, the first output component $x$ may be chosen more general as $h(x)$ (\cite[Chapter 10]{van2000l2}). In this paper, we restrict our investigation in the form \eqref{e:p-output} for simplicity of the notations.
\end{remark}

Define
$$
L^2_{\alpha}[0,\infty):=\left\{y\in L^2[0,\infty)\,\left|\, \int_{0}^{\infty}e^{-\alpha t}|y(t)|^2dt<\infty\right.\right\},
$$
where $\alpha$ is a positive discount factor.

\begin{definition}[Finite $L_2$-gain]
Let $\gamma>0$. The nonlinear system \eqref{e:system} is said to have finite $L_2$-gain less than or equal to $\gamma$ if for all $d\in L^2_{\alpha}[0,\infty)$,
\begin{eqnarray}\label{e:finite-gain}
\int_t^\infty e^{-\alpha(s-t)}\|z(s)\|^2ds\le \gamma^2 \int_t^\infty e^{-\alpha(s-t)}\|d(s)\|^2ds,
\end{eqnarray}
with $x(t)=0$.
\end{definition}
\begin{remark}
We point out that for the output $z$ as in \eqref{e:p-output}, the definition of $L_2$-gain as in \cite{van19922} \cite[Definition 1.2.1]{van2000l2} is equivalent to our definition when the discounted factor $\alpha=0$ and the bias is zero.
\end{remark}

We define the performance function by
\begin{eqnarray}\label{e:performance}
&&J(x, u,d)\\
&=&\int_t^\infty e^{-\alpha(s-t)}(x^TQ x+u^TWu-\gamma^2d^TGd)ds,\notag
\end{eqnarray}
where $x$ satisfies system \eqref{e:system} with $x(t)=x$ if $u,d$ are given.
It is well known that the robust control problem \eqref{e:system} with performance function \eqref{e:performance} can be considered as a zero-sum game. Moreover, we have the following basic result.
\begin{prop}\label{p:saddle}
For the performance function \eqref{e:performance},  the game control problem \eqref{e:system} has unique solution $(u^*,d^*)$. Moreover, this solution $(u^*,d^*)$ is a saddle solution.
\end{prop}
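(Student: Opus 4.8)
The plan is to work within the dynamic-programming framework and reduce the saddle-point question to the separable convex–concave structure of the integrand in \eqref{e:performance}. Introduce the value function $V(x) = \inf_u \sup_d J(x,u,d)$, which by the autonomy of \eqref{e:system} and the infinite-horizon discounted form of \eqref{e:performance} depends on the current state alone. The dynamic programming principle, applied along trajectories, leads (formally) to the stationary discounted HJI equation whose right-hand side is the pointwise optimization of the Hamiltonian
$$\mathcal{H}(x,p,u,d) = p^T\bigl(f(x)+g(x)u+k(x)d\bigr) + x^TQx + u^TWu - \gamma^2 d^TGd,$$
evaluated at $p = \nabla V(x)$.

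First I would observe that $\mathcal{H}$ is \emph{separable} in $(u,d)$: no cross term couples them, so $\mathcal{H} = \phi_x(u) + \psi_x(d) + r(x,p)$, where $\phi_x(u) = u^TWu + p^Tg(x)u$ is strictly convex (as $W \succ 0$) and $\psi_x(d) = -\gamma^2 d^TGd + p^Tk(x)d$ is strictly concave (as $\gamma^2 G \succ 0$). The first-order conditions then give a unique minimizer and a unique maximizer,
$$u^* = -\tfrac12 W^{-1}g(x)^T\nabla V(x), \qquad d^* = \tfrac{1}{2\gamma^2} G^{-1}k(x)^T\nabla V(x),$$
and because the two optimizations decouple, the Isaacs condition $\inf_u\sup_d \mathcal{H} = \sup_d\inf_u \mathcal{H}$ holds automatically, the common value being attained precisely at $(u^*,d^*)$.

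Next I would upgrade this pointwise saddle to a saddle of the full differential game by a verification argument. Assuming a $C^1$ stabilizing solution $V$ of the HJI equation (the existence of which is the content of Theorem \ref{t:stable}), I would integrate $\tfrac{d}{ds}\bigl[e^{-\alpha(s-t)}V(x(s))\bigr]$ along trajectories and substitute the HJI identity to obtain the completion-of-squares formula
$$J(x,u,d) = V(x) + \int_t^\infty e^{-\alpha(s-t)}\Bigl[(u-u^*)^TW(u-u^*) - \gamma^2(d-d^*)^TG(d-d^*)\Bigr]ds.$$
Setting $u = u^*$ gives $J(x,u^*,d) \le V(x) = J(x,u^*,d^*)$ for all admissible $d$, while setting $d = d^*$ gives $V(x) \le J(x,u,d^*)$ for all admissible $u$; these are exactly the saddle inequalities, and the strict definiteness of $W$ and $\gamma^2 G$ forces $(u^*,d^*)$ to be the unique achieving pair.

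The main obstacle is not the algebra but the analytic hypotheses needed to close the verification step: one must know that $V$ is $C^1$, that the closed-loop feedback $(u^*,d^*)$ generates trajectories remaining in $\Omega$ with $z,d \in L^2_{\alpha}[0,\infty)$, and that the boundary term $e^{-\alpha(s-t)}V(x(s)) \to 0$ as $s \to \infty$, so that the integrated identity actually identifies $V(x)$ with $J(x,u^*,d^*)$. Since the existence and regularity of the stabilizing solution $V$ through the stable manifold are supplied later in the paper, at this stage the proposition rests on that forthcoming construction (equivalently, on the classical theory of separable linear-quadratic-type games), with the separable convex–concave structure providing both the uniqueness and the saddle property once $V$ is available.
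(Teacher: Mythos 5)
Your proposal is correct and follows essentially the same route as the paper: both reduce the problem to pointwise optimization of the separable Hamiltonian, obtain the unique pair $(u^*,d^*)$ from the stationarity conditions \eqref{e:stationary-1}, and conclude the saddle property from the resulting Isaacs/Nash equality $\min_u\max_d=\max_d\min_u$. The paper compresses the last step into ``direct computations yield'' with a citation to the standard zero-sum game theory, whereas you spell out the completion-of-squares verification and the regularity/decay hypotheses on $V$ that this implicitly requires; that is a more careful rendering of the same argument, not a different one.
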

\begin{proof}
Denote $V(x(t))=J(x(t), u(t), d(t))$. Let
\begin{eqnarray}\label{e:value1}
H(x, V,u,d)&:=&x^T Qx+u^TWu-\gamma^T d^TGd\\
&&\quad-\alpha V+V_x^T(f+gu+kd)=0,\notag
\end{eqnarray}
where $f:=f(x)$, $g:=g(x)$, $k:=k(x)$, and $V_x:=\frac{\partial V}{\partial x}$.
The solvability of the $H_\infty$ control problem is equivalent to solvability of the following zero-sum game \cite{bacsar2008h}:
\begin{eqnarray}\label{e:minmax}
V^*(x(t))=J(x(t),u^*,d^*)=\min_u\max_d J(x(t),u,d),
\end{eqnarray}
where $x(t)$ satisfies \eqref{e:system} with $x(0)=x$ and $u,d$. From \eqref{e:minmax} and the stationary condition (\cite[Section 10.2]{lewis2012optimal}):
\begin{eqnarray}\label{e:stationary-1}
\frac{\partial H(x, V^*,u,d)}{\partial u}=0,\quad \frac{\partial H(x, V^*,u,d)}{\partial d}=0,
\end{eqnarray}
the optimal control and the worst disturbance inputs are uniquely given by
\begin{eqnarray}\label{e:u*d*}
u^*=-\frac{1}{2}W^{-1}g^TV_x^*, \quad d^*=\frac{1}{2\gamma^2}G^{-1}k^T V^*_x.
\end{eqnarray}
Direct computations yield that
\begin{eqnarray}\label{e:nash-condition}
&&V^*(x(t))=J(x(t), u^*,d^*)\\
&&=\min_{u}\max_d J(x(t), u,d)=\max_d \min_{u}J(x(t), u,d).\notag
\end{eqnarray}
Hence the game control problem \eqref{e:system} with performance function \eqref{e:performance} satisfies the Nash equilibrium condition, and then is uniquely solvable and $(u^*,d^*)$ is a saddle solution (see e.g. \cite[Section 10.2]{lewis2012optimal}). This completes the proof.
\end{proof}

Note that from \eqref{e:value1} and \eqref{e:u*d*}, $V^*$ satisfies the following discounted HJI equation
\begin{eqnarray}\label{e:HJI}
&&\bar H(x,V, V_x)\\
&:=&V^{T}_xf+V^{T}_x\left(\frac{1}{4\gamma^2}kG^{-1}k^T-\frac{1}{4}gW^{-1}g^T\right)V_x\notag\\
&&\quad\quad\quad\quad\quad\quad\quad\quad\quad\quad\quad-\alpha V+ x^T Qx=0.\notag
\end{eqnarray}
It is well known that the solution $V^*$ of \eqref{e:HJI} may be not differentiable at some points, hence $V^*$ is considered as a viscous solution to the first order HJI equation \eqref{e:HJI}. See e.g. \cite{evans2010partial}. Moreover, if $\gamma=+\infty$, then \eqref{e:HJI} becomes a Hamilton-Jacobi-Bellman (HJB) equation which corresponds to optimal control problem \cite{van2000l2}.

\section{Stabilizing solution of the discounted HJI equation} \label{s:stabilizing}

In this section, we define stable manifold of the contact Hamiltonian system of the discounted HJI equation and the associated stabilizing solution.

The characteristic system of the discounted HJI equation \eqref{e:HJI} (see e.g. \cite[Section 3.2]{evans2010partial}) is a contact Hamiltonian system of form:
\begin{eqnarray}\label{e:charact-syst1}
\left\{\begin{array}{l}
  \dot x=\bar H_p(x,V,p)\\
 \dot p=-\bar H_{V}(x,V,p)p-\bar H_x(x,V,p),\\
  \dot V=\bar H_p(x,V,p)^Tp,
\end{array}\right.
\end{eqnarray}
where $p:=V_x$.
Specifically, for \eqref{e:HJI}, the contact Hamiltonian system \eqref{e:charact-syst1} is
\begin{eqnarray}\label{e:charact-syst2}
\left\{\begin{array}{l}
  \dot x=f(x)+\left(\frac{1}{2\gamma^2}kG^{-1}k^T-\frac{1}{2}gW^{-1}g^T\right)p\\
 \dot p=\alpha p-\frac{\partial f^T(x)}{\partial x}p\\
 \quad~~-p^T\frac{\partial}{\partial x}\left(\frac{1}{4\gamma^2}kG^{-1}k^T-\frac{1}{4}gW^{-1}g^T\right)p-2 Q x\\
 \dot V=f(x)^T p+\left[\left(\frac{1}{2\gamma^2}kG^{-1}k^T-\frac{1}{2}gW^{-1}g^T\right)p\right]^T p.
\end{array}\right.
\end{eqnarray}
\begin{remark}
Note that unlike the case without a discounted factor, the system described in equations \eqref{e:charact-syst1} or equivalently in \eqref{e:charact-syst2} is a \emph{contact} Hamiltonian system (\cite{libermann2012symplectic}). Furthermore, it is important to highlight that the first two equations in \eqref{e:charact-syst2} do not rely on $V$. Consequently, the system in \eqref{e:charact-syst2} can be completely solved in two steps: initially solving the first two equations and subsequently, using this solution $(x,p)$, solving the third equation for $V$.
\end{remark}

\begin{lemma}\label{l:HC}
The contact Hamiltonian $\bar H(x,V,p)$ is invariant along the solutions to \eqref{e:charact-syst1}.
\end{lemma}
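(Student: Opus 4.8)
The plan is to verify directly that the scalar function $\bar H(x,V,p)$ has vanishing time derivative along any solution of \eqref{e:charact-syst1}. This is exactly the classical observation from the method of characteristics that the defining function of a first-order PDE is constant along its characteristic strips (cf. \cite[Section 3.2]{evans2010partial}), so I expect a short, self-contained computation rather than any structural argument. First I would apply the chain rule to the map $t\mapsto \bar H(x(t),V(t),p(t))$, treating $\bar H_V$ as a scalar and $\bar H_x,\bar H_p$ as gradient (column) vectors, to get
\[
\frac{d}{dt}\bar H=\bar H_x^{T}\dot x+\bar H_V\,\dot V+\bar H_p^{T}\dot p .
\]

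Next I would substitute the three characteristic equations of \eqref{e:charact-syst1}, namely $\dot x=\bar H_p$, $\dot V=\bar H_p^{T}p$, and $\dot p=-\bar H_V p-\bar H_x$, into this expression, which yields
\[
\frac{d}{dt}\bar H=\bar H_x^{T}\bar H_p+\bar H_V\bigl(\bar H_p^{T}p\bigr)+\bar H_p^{T}\bigl(-\bar H_V p-\bar H_x\bigr).
\]
The final step is simply to collect terms: the contribution $\bar H_V(\bar H_p^{T}p)$ cancels against $-\bar H_V(\bar H_p^{T}p)$, while $\bar H_x^{T}\bar H_p$ and $-\bar H_p^{T}\bar H_x$ cancel because each is a scalar equal to the transpose of the other. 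Hence $\frac{d}{dt}\bar H\equiv 0$, so $\bar H$ is constant along every solution of \eqref{e:charact-syst1}, which is the asserted invariance. In particular, since $V^*$ solves \eqref{e:HJI} we have $\bar H\equiv 0$ on the relevant trajectories, justifying the error-detection remark made earlier in the paper.

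There is essentially no analytic obstacle here; the only point requiring care is bookkeeping of scalars versus vectors in the cancellation, and a conceptual subtlety worth flagging. For a \emph{generic} contact Hamiltonian flow the third equation carries an extra term $-\bar H$ (so that $\dot V=\bar H_p^{T}p-\bar H$), and then one computes instead $\frac{d}{dt}\bar H=-\bar H_V\,\bar H$, i.e. the Hamiltonian is not conserved but only its zero level set is invariant. The reason exact conservation holds in our setting is that \eqref{e:charact-syst1} is written in the characteristic-strip form associated with the equation $\bar H=0$, where that $-\bar H$ term is dropped; the statement of the lemma is consistent precisely because the computation above shows $\bar H$ is constant, so starting on $\{\bar H=0\}$ keeps the trajectory there. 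I would therefore present the proof as the two-line differentiation and cancellation above, optionally adding one sentence to contrast it with the non-conservation of a general contact Hamiltonian.
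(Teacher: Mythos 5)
Your proof is correct and is exactly the argument the paper intends: the paper's own proof simply states that the result follows by differentiating $\bar H$ along solutions of \eqref{e:charact-syst1} (citing \cite[Section 3.2]{evans2010partial}), and your chain-rule computation with the two cancellations is precisely that direct verification, just written out in full.
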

\begin{proof}
It is a direct result by taking derivative on $\bar H(x,V,p)$ with respect to $t$ along the solution of \eqref{e:charact-syst1}. See also \cite[Section 3.2]{evans2010partial}.
\end{proof}

\begin{definition}[Stabilizing solution of HJI equation]
We say that a solution $V(x)$ of the discounted HJI equation \eqref{e:HJI} is a stabilizing solution if $V_x(0)=0$ and $0$ is an asymptotically stable equilibrium of the vector field $$f(x)+\left(\frac{1}{2\gamma^2}kG^{-1}k^T-\frac{1}{2}gW^{-1}g^T\right)V_x(x).$$
\end{definition}

The next result explains the relation between the stabilizing solution of \eqref{e:HJI} and the saddle solution of the $H_{\infty}$ control problem.

\begin{prop}
Assume that $V$ is a nonnegative $C^1$ stabilizing solution of \eqref{e:HJI}. Then $V$ is the saddle solution of \eqref{e:system} with performance function \eqref{e:performance}.
\end{prop}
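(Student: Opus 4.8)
The plan is to run a verification (completing-the-squares) argument that simultaneously identifies $V$ with the value $V^*$ of Proposition \ref{p:saddle} and exhibits the controls it induces as a saddle point. First I would record that $V(0)=0$: evaluating the HJI equation \eqref{e:HJI} at $x=0$ and using $V_x(0)=0$ leaves only $-\alpha V(0)=0$, so $V(0)=0$ since $\alpha>0$. I then set $u^*=-\tfrac12 W^{-1}g^TV_x$ and $d^*=\tfrac{1}{2\gamma^2}G^{-1}k^TV_x$ as in \eqref{e:u*d*}, now built from the given stabilizing solution $V$ rather than from $V^*$.

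The heart of the argument is a pointwise identity along an arbitrary trajectory $x(\cdot)$ of \eqref{e:system} driven by inputs $(u,d)$. Using the HJI equation to replace $x^TQx$ and using $\tfrac{d}{ds}V(x(s))=V_x^T(f+gu+kd)$ to absorb the drift, then completing the square separately in $u$ and in $d$, I obtain
\begin{eqnarray*}
&& e^{-\alpha(s-t)}\bigl(x^TQx+u^TWu-\gamma^2 d^TGd\bigr)\\
&=& -\frac{d}{ds}\Bigl[e^{-\alpha(s-t)}V(x(s))\Bigr]+e^{-\alpha(s-t)}\Bigl[(u-u^*)^TW(u-u^*)-\gamma^2(d-d^*)^TG(d-d^*)\Bigr].
\end{eqnarray*}
Integrating from $t$ to $\infty$ telescopes the first term, so that
\begin{eqnarray*}
J(x,u,d)&=&V(x(t))-\lim_{s\to\infty}e^{-\alpha(s-t)}V(x(s))\\
&&\quad+\int_t^\infty e^{-\alpha(s-t)}\Bigl[(u-u^*)^TW(u-u^*)-\gamma^2(d-d^*)^TG(d-d^*)\Bigr]\,ds.
\end{eqnarray*}

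With this identity the conclusion follows by sign inspection. Along the closed loop $(u^*,d^*)$ the stabilizing property forces $x(s)\to0$, and since $V$ is continuous with $V(0)=0$ and $e^{-\alpha(s-t)}\to0$, the boundary limit vanishes and both quadratic terms drop out, giving $J(x,u^*,d^*)=V(x(t))$. Fixing $u=u^*$ and varying $d$, the control term is absent, the limit term is $\ge0$ because $V\ge0$, and the disturbance term is $\le0$, so $J(x,u^*,d)\le V(x(t))=J(x,u^*,d^*)$, with equality exactly at $d=d^*$. Symmetrically, fixing $d=d^*$ and varying $u$ over admissible inputs, the control term is $\ge0$, so $J(x,u,d^*)\ge V(x(t))=J(x,u^*,d^*)$. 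These are precisely the saddle inequalities; combined with the uniqueness in Proposition \ref{p:saddle} they identify $V$ with $V^*$ and $(u^*,d^*)$ with the saddle solution.

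I expect the delicate point to be the behavior of the boundary term $\lim_{s\to\infty}e^{-\alpha(s-t)}V(x(s))$ for comparison inputs other than $(u^*,d^*)$. For the $u=u^*$ branch the nonnegativity of $V$ already supplies the correct sign for free, but for the $d=d^*$ branch one must ensure this limit vanishes (or is suitably controlled) so that the nonnegative control term genuinely yields the lower bound; this is handled by restricting to admissible $u$ for which the discounted cost is finite (equivalently, the trajectory remains in $\Omega$ and $e^{-\alpha(s-t)}V(x(s))\to0$), with the remaining estimates being routine.
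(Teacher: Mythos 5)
Your argument is correct, but it is a genuinely different route from the one the paper takes. The paper's own proof is very short: it observes that $p(t)=V_x(x(t))$ solves the characteristic system \eqref{e:charact-syst2}, notes that $\bar u=-\tfrac12 W^{-1}g^TV_x$ and $\bar d=\tfrac{1}{2\gamma^2}G^{-1}k^TV_x$ satisfy the stationarity conditions \eqref{e:stationary-1} and hence the Nash equilibrium condition, and then concludes by the uniqueness asserted in Proposition \ref{p:saddle}. You instead carry out the classical completion-of-squares verification: using the HJI equation \eqref{e:HJI} you derive the pointwise identity
\begin{equation*}
e^{-\alpha(s-t)}\bigl(x^TQx+u^TWu-\gamma^2 d^TGd\bigr)=-\frac{d}{ds}\Bigl[e^{-\alpha(s-t)}V(x(s))\Bigr]+e^{-\alpha(s-t)}\Bigl[(u-u^*)^TW(u-u^*)-\gamma^2(d-d^*)^TG(d-d^*)\Bigr],
\end{equation*}
integrate, and read off the saddle inequalities $J(x,u^*,d)\le J(x,u^*,d^*)\le J(x,u,d^*)$ by sign inspection. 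Your approach is more self-contained and more informative: it actually proves the saddle property rather than inferring it from stationarity plus uniqueness, it identifies $V$ with the value $V(x(t))=J(x,u^*,d^*)$, and it makes visible exactly where the hypotheses $V\ge0$ (sign of the boundary term in the $\max_d$ branch) and the stabilizing property ($x(s)\to0$ along the closed loop, so the boundary term telescopes to $V(x(t))$) are used --- neither of which plays any explicit role in the paper's argument. The price is the admissibility caveat you correctly flag for the $\min_u$ branch, where one must ensure $e^{-\alpha(s-t)}V(x(s))\to0$ for the comparison inputs; the paper sidesteps this entirely by leaning on Proposition \ref{p:saddle}.
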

\begin{proof}
Since $V\ge 0$ is a stabilizing solution of \eqref{e:HJI}, we have that $(x(t),p(t))$ satisfies the characteristic system  \eqref{e:charact-syst2} with $p(t)=V_x(x(t))$ (\cite[Section 3.2]{evans2010partial}).

If we choose the input $\bar u=-\frac{1}{2}W^{-1}g^T V_x$, and the worst disturbance $\bar d=\frac{1}{2\gamma^2}G^{-1}k^TV_x$, then, from equation \eqref{e:stationary-1}, the pair $ (\bar u,\bar d)$ satisfies the Nash equilibrium condition. Hence $(\bar u,\bar d)$ is a saddle solution. The uniqueness of the saddle solution (Proposition \ref{p:saddle}) yields the conclusion.
\end{proof}

It is known that nonnegative stabilizing solution $V$ can derive a control with finite $L_2$-gain.
\begin{theorem}(\cite[Theorem 2]{modares2015h})
Assume $V(x)\ge 0$ is a solution to the discounted HJI equation \eqref{e:HJI} with $V(0)=0$. Then $\bar u$ in \eqref{e:u*d*} makes the closed-loop system \eqref{e:system} to have $L_2$-gain less than or equal to $\gamma$.
\end{theorem}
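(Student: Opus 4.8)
The plan is to use $V$ as a storage (Lyapunov-type) function, establish a pointwise dissipation inequality for the closed-loop system, and then integrate it against the discount weight $e^{-\alpha(s-t)}$. First I would substitute the feedback $\bar u=-\tfrac12 W^{-1}g^TV_x$ into \eqref{e:system} and differentiate $V$ along the resulting closed-loop trajectory, which gives
\begin{align*}
\dot V=V_x^T\dot x=V_x^Tf-\tfrac12 V_x^TgW^{-1}g^TV_x+V_x^Tkd.
\end{align*}
Next I would use the discounted HJI equation \eqref{e:HJI} to solve for $V_x^Tf$ and eliminate it, leaving
\begin{align*}
\dot V-\alpha V&=-x^TQx-\tfrac14 V_x^TgW^{-1}g^TV_x\\
&\quad-\tfrac1{4\gamma^2}V_x^TkG^{-1}k^TV_x+V_x^Tkd.
\end{align*}

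The decisive algebraic step is completing the square in $d$ around the worst-case disturbance $d^*=\tfrac1{2\gamma^2}G^{-1}k^TV_x$ from \eqref{e:u*d*}. A direct computation gives
\begin{align*}
&V_x^Tkd-\tfrac1{4\gamma^2}V_x^TkG^{-1}k^TV_x\\
&\quad=\gamma^2d^TGd-\gamma^2(d-d^*)^TG(d-d^*).
\end{align*}
Recognizing that $\|z\|^2=x^TQx+\bar u^TW\bar u=x^TQx+\tfrac14 V_x^TgW^{-1}g^TV_x$, and using positive-definiteness of $G$ to discard the nonnegative quadratic $(d-d^*)^TG(d-d^*)$, I obtain the pointwise dissipation inequality
\begin{align*}
\dot V-\alpha V\le -\|z\|^2+\gamma^2\|d\|^2.
\end{align*}

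The final step exploits the structure forced by the discount factor: multiplying by $e^{-\alpha(s-t)}$ turns the left-hand side into an exact derivative, since $\frac{d}{ds}\big(e^{-\alpha(s-t)}V(x(s))\big)=e^{-\alpha(s-t)}(\dot V-\alpha V)$. Integrating from $t$ to $T$ and invoking the initial condition $x(t)=0$, so that $V(x(t))=V(0)=0$, yields
\begin{align*}
e^{-\alpha(T-t)}V(x(T))\le \int_t^T e^{-\alpha(s-t)}\big(\gamma^2\|d(s)\|^2-\|z(s)\|^2\big)\,ds.
\end{align*}
Here the hypothesis $V\ge 0$ is essential: the boundary term on the left is nonnegative, so it can be dropped, and letting $T\to\infty$ delivers exactly \eqref{e:finite-gain}.

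I expect the main obstacle to be not any single computation but the sign bookkeeping in the completion of squares, ensuring in particular that the $-\alpha V$ term inherited from \eqref{e:HJI} pairs with $\dot V$ to form the discounted total derivative; the roles of $V\ge 0$ and $V(0)=0$ are precisely to annihilate the boundary contribution and the initial value in the limit $T\to\infty$.
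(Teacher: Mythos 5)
Your proof is correct, and it is exactly the standard dissipation-inequality argument: the paper itself cites this theorem from \cite{modares2015h} without reproducing a proof, but its own proof of the analogous Theorem \ref{t:finite-l2-gain} uses precisely your strategy (evaluate the Hamiltonian along the closed loop, complete the square around $d^*$, multiply by $e^{-\alpha t}$ to form an exact derivative, integrate, and drop the nonnegative boundary term using $V\ge 0$ and $V(0)=0$). Your argument is the $\varepsilon=0$ specialization of that proof, so no further comparison is needed.
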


\section{Existence of semiglobal nonnegative stabilizing solution of the HJI equation}\label{s:existence}
In Section \ref{s:stabilizing} above, we shows that nonnegative stabilizing solution implies the saddle solution of the $H_{\infty}$ control system that has finite $L_2$-gain. In this section, we prove that if $\alpha$ is smaller than certain bound, then there exists nonnegative stabilizing solution in a semiglobal domain containing th origin.

\subsection{The linearized system}
Let $I_{n}$ be the identity matrix of order $n$.
Consider the linearized system of \eqref{e:system}:
\begin{eqnarray}\label{e:linearized}
&&\dot {x}=Ax+Bu+Dd,\label{e:linearized}
\end{eqnarray}
with output and control being
\begin{eqnarray}
&&\bar y=Q^{\frac{1}{2}}x,\label{e:detect}\\
&&\bar u=Lx.\label{e:pre-control}
\end{eqnarray}
Here $A=\frac{\partial f(0)}{\partial x}$, $B=g(0)$, $D=k(0)$ and $L$ is some matrix. It is clear that $(Q^{\frac{1}{2}},A)$ is detectable since $Q$ is invertible.
Define the performance function as \eqref{e:performance}.
Then for $\alpha>0$ sufficiently small, the robust control system \eqref{e:system} with locally asymptotically stable if $(u,d)$ is given by \eqref{e:u*d*}. Moreover, \cite{modares2015h} gave an estimate for upper bound for $\alpha$. In the following, by using operator method, we find another estimation of the upper bound for $\alpha$ which is sharper than that in \cite{modares2015h}.

The linear part of the first two equation of the system \eqref{e:charact-syst2} is
\begin{eqnarray}
\left[\begin{array}{c}
         \dot{x} \\
         \dot{p}
       \end{array}\right]=\left[
                            \begin{array}{cc}
                              A & -\frac{1}{2}(BW^{-1}B^T-\frac{1}{\gamma^2}DG^{-1}D^T) \\
                              -2Q & -A^T+\alpha I_n \\
                            \end{array}
                          \right]\left[
                                   \begin{array}{c}
                                     x \\
                                     p \\
                                   \end{array}
                                 \right].\notag
\end{eqnarray}
For simplicity, we denote
\begin{eqnarray}\label{e:cHM}
H_c(\alpha,\gamma)=\left[
                            \begin{array}{cc}
                              A & -\frac{1}{2}(BW^{-1}B^T-\frac{1}{\gamma^2}DG^{-1}D^T) \\
                              -2Q & -A^T+\alpha I_n \\
                            \end{array}
                          \right].
\end{eqnarray}
Then the corresponding generalized algebraic Riccati equation (GARE) is
\begin{multline}\label{e:Riccati1}
2Q+A^TP+PA-\alpha P-\frac{1}{2}P(BR^{-1}B^T-\frac{1}{\gamma^2}DG^{-1}D^T)P\\
=0.
\end{multline}
We now investigate the existence of nonnegative definite symmetric solution $P$ of GARE. Rewrite the GARE \eqref{e:Riccati1} as the following modified algebraic Riccati equation (MARE):
\begin{multline}\label{e:Riccati2}
2Q+(A-\frac{\alpha}{2} I_n)^TP+P(A-\frac{\alpha}{2} I_n)\\
-\frac{1}{2}P(BW^{-1}B^T-\frac{1}{\gamma^2}DG^{-1}D^T)P=0.
\end{multline}
The Hamiltonian matrix associated to the GARE \eqref{e:Riccati2} is
\begin{multline}
 H(\alpha,\gamma)=\\
 \left[
\begin{array}{cc}
A-\frac{\alpha}{2} I_n & -\frac{1}{2}(BW^{-1}B^T-\frac{1}{\gamma^2}DG^{-1}D^T) \\
-2Q & -A^T+\frac{\alpha}{2} I_n \\
\end{array}
\right].
\end{multline}
Letting $\gamma=+\infty$, we define
\begin{eqnarray}\label{e:H-alpha}
H_{\alpha}:= H(\alpha,\infty)=\left[
\begin{array}{cc}
A-\frac{\alpha}{2} I_n & -\frac{1}{2}BW^{-1}B^T \\
-2Q & -A^T+\frac{\alpha}{2} I_n \\
\end{array}
\right],
\end{eqnarray}
and
\begin{eqnarray}
H_0:=H(0,\infty)=\left[
\begin{array}{cc}
A & -\frac{1}{2}BW^{-1}B^T \\
-2Q & -A^T \\
\end{array}
\right].\notag
\end{eqnarray}
Set
$
T_0= \left[
                       \begin{array}{cc}
                         -I_n & 0 \\
                         0 & I_n \\
                       \end{array}
                     \right].
$
Hence
\begin{eqnarray}\label{e:H_alpha}
H_{\alpha}=H_0+\frac{\alpha}{2} T_0.
\end{eqnarray}
A necessary and sufficient condition for the existence of the stabilizing solution of the GARE \eqref{e:Riccati2} with $\alpha=0$ and $\gamma=+\infty$ is the following (\cite{sakamoto2008}, \cite[Corollary 2.4.3]{abou2012matrix}):
\begin{enumerate}
  \item[($\bf C_1$)] $H_{0}$ is hyperbolic, i.e., there is no eigenvalue of $H_{\alpha}$ on the imaginary axis.
  \item[($\bf C_2$)] $\left(A,B\right)$ is stabilizable.
\end{enumerate}

In what follows, we propose an approach to estimate $\alpha$ to ensure the existence of stabilizing solution of \eqref{e:Riccati2} under some natural assumptions.

\begin{lemma}\label{t:alpha*}
If $H_0$ satisfies ($\bf C_1$), then $H_{\alpha}$ is hyperbolic when
\begin{eqnarray}
\alpha<\delta_0:=2{\rm dist}(\sigma_-(H_0),{\rm Im}),
\end{eqnarray}
where ${\rm Im}$ represents the imaginary axis, $\sigma_-(H_0)$ denotes the set of eigenvalues of $H_0$ with negative real parts, and ${\rm dist}(\sigma_-(H_0),{\rm Im})$ denotes the distance between $\sigma_-(H_0)$ and ${\rm Im}$.
\end{lemma}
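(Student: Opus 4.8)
The goal is to show that $i\omega\notin\sigma(H_\alpha)$ for every $\omega\in\mathbb R$ whenever $0<\alpha<\delta_0$. I would first record the algebraic structure that drives everything: with $J=\begin{pmatrix}0&I_n\\-I_n&0\end{pmatrix}$ one checks directly (using $Q=Q^{T}$, $W=W^{T}$) that $JH_\alpha$ is symmetric for \emph{every} $\alpha$, so each $H_\alpha$ is a Hamiltonian matrix and $\sigma(H_\alpha)$ is symmetric about the imaginary axis. Combined with $(\mathbf{C_1})$, this means that at $\alpha=0$ every eigenvalue of $H_0$ has real part of modulus at least $\delta_0/2$: those in $\sigma_-(H_0)$ by the definition of $\delta_0$, and those with positive real part by the $\lambda\mapsto-\lambda$ symmetry. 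Since $H_\alpha=H_0+\tfrac\alpha2 T_0$ is an affine family, its eigenvalues move continuously in $\alpha$, and analytically along branches away from collisions.

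The plan is then a tracking argument. I would follow each eigenvalue branch $\lambda(\alpha)$ starting from $\alpha=0$ and prove the slow-motion estimate $\bigl|\tfrac{d}{d\alpha}\operatorname{Re}\lambda(\alpha)\bigr|\le\tfrac12$. Granting this, on $[0,\alpha]$ with $\alpha<\delta_0$ the real part of any branch changes by strictly less than $\delta_0/2$, so it cannot travel from $|\operatorname{Re}\lambda(0)|\ge\delta_0/2$ onto the imaginary axis; hence no eigenvalue reaches $i\mathbb R$ and $H_\alpha$ stays hyperbolic. To obtain the estimate at a simple eigenvalue I would use first-order perturbation theory, $\tfrac{d\lambda}{d\alpha}=\tfrac12\,u^{T}T_0v/(u^{T}v)$, with $v$ a right and $u$ a left eigenvector; the Hamiltonian identity $J^{-1}H_\alpha^{T}J=-H_\alpha$ shows that the left eigenvector for $\lambda$ may be taken as $u^{T}=(Jw)^{T}$, where $w$ is the right eigenvector of $H_\alpha$ for $-\lambda$. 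This rewrites the derivative as $\tfrac12\,w^{T}(JT_0)v/(w^{T}Jv)$, i.e.\ entirely through the symplectically paired eigenvectors of $\lambda$ and $-\lambda$.

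The hard part is exactly this estimate, namely proving $|\operatorname{Re}(w^{T}(JT_0)v)|\le|w^{T}Jv|$, and I expect this to be where the real work lies. It cannot come from the size of the perturbation alone, since a purely metric argument only yields the weaker pseudospectral bound: left-multiplying by $J$ turns $i\omega\in\sigma(H_\alpha)$ into $0\in\sigma\bigl(S_0+\tfrac\alpha2 R+\omega C\bigr)$, where $S_0=JH_0=S_0^{T}$, $R=JT_0=\begin{pmatrix}0&I_n\\I_n&0\end{pmatrix}$ and $C=\begin{pmatrix}0&-iI_n\\iI_n&0\end{pmatrix}$ are all Hermitian, and Weyl's inequality applied to the $\tfrac\alpha2 R$ term (with $\|R\|=1$) only gives hyperbolicity for $\alpha<2\inf_\omega\sigma_{\min}(H_0-i\omega I)$; this pseudospectral distance can be strictly smaller than $\delta_0/2$ when $H_0$ is far from normal. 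Reaching the spectral constant $\delta_0/2$ therefore forces full use of the Hamiltonian symmetry, i.e.\ the $J$-orthogonality pairing between the $\lambda$- and $(-\lambda)$-eigenspaces, rather than any norm bound. Finally I would remove the simplicity assumption by a routine continuity and limiting argument over the finitely many parameter values at which eigenvalues collide, concluding that $H_\alpha$ is hyperbolic for all $0<\alpha<\delta_0$.
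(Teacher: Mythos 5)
Your argument is genuinely different from the paper's, but it is incomplete at exactly the point you flag yourself: the slow-motion estimate $\bigl|\tfrac{d}{d\alpha}\operatorname{Re}\lambda(\alpha)\bigr|\le\tfrac12$ is reduced to the inequality $|\operatorname{Re}(w^{T}(JT_0)v)|\le|w^{T}Jv|$ and then left unproven. Writing $v=(v_1,v_2)$, $w=(w_1,w_2)$, that inequality reads $|w_1^{T}v_2+w_2^{T}v_1|\le|w_1^{T}v_2-w_2^{T}v_1|$, i.e.\ $\operatorname{Re}\bigl((w_1^{T}v_2)\overline{(w_2^{T}v_1)}\bigr)\le 0$, and nothing in the Hamiltonian pairing alone forces this: it must use the sign-definiteness of the blocks $-2Q$ and $-\tfrac12BW^{-1}B^{T}$, which your setup never invokes. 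So as it stands the tracking argument does not close, and the scaffolding around it (spectral symmetry, continuity of branches, handling of collisions) cannot substitute for the missing estimate. This is a genuine gap, not a routine omission.

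The comparison with the paper is nevertheless instructive, because your diagnosis of why a ``purely metric'' argument cannot reach $\delta_0$ applies verbatim to the paper's own proof. The paper expands the resolvent $R(\zeta,\alpha)=R(\zeta)\sum_{i\ge0}(-\tfrac{\alpha}{2}T_0R(\zeta))^{i}$ on the imaginary axis and then converts the convergence radius into a spectral distance via $\|R(\zeta)\|=\max|\sigma(R(\zeta))|$; that identity holds only for normal matrices, and in general $\|R(\zeta)\|^{-1}=\sigma_{\min}(H_0-\zeta I)\le{\rm dist}(\sigma(H_0),\zeta)$, so the paper's argument actually establishes hyperbolicity only for $\alpha<2\inf_{\omega}\sigma_{\min}(H_0-i\omega I)$ --- precisely the pseudospectral threshold you correctly identify as possibly smaller than $\delta_0$. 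If you want to close your proof (and in fact get the full statement cleanly), abandon the derivative bound and use the quadratic-form identity directly: if $H_\alpha v=i\omega v$ with $v=(v_1,v_2)$, taking the real part of $v_2^{*}(\text{row }1)+v_1^{*}(\text{row }2)$ kills all the skew terms and leaves $2v_1^{*}Qv_1+\tfrac12 v_2^{*}BW^{-1}B^{T}v_2=0$, so $v_1=0$ and $B^{T}v_2=0$, whence $A^{T}v_2=(\tfrac{\alpha}{2}-i\omega)v_2$ and $(0,v_2)$ is an eigenvector of $H_0$ with eigenvalue $-\tfrac{\alpha}{2}+i\omega$; this forces ${\rm dist}(\sigma_-(H_0),{\rm Im})\le\tfrac{\alpha}{2}$, i.e.\ $\alpha\ge\delta_0$, which is the contrapositive of the lemma and uses only $(\mathbf{C_1})$ together with $Q>0$, $W>0$.
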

\begin{proof}
Since the Hamiltonian matrix $H_0$ is hyperbolic, it follows that the set of eigenvalues of $H_0$ is symmetric with respect to the imaginary axis (and also the real axis), and there is no eigenvalue on the imaginary axis. Hence ${\rm dist}(\sigma_-(H_0),{\rm Im})>0$. For any $\zeta\in {\rm Im}$, the resolvent
$$
R(\zeta)=(H_0-\zeta)^{-1}
$$
is well-defined.

Next, we estimate $\alpha\ge 0$ such that $H_{\alpha}-\zeta$ is invertible for all $\zeta\in {\rm Im}$. From \eqref{e:H_alpha}, the resolvent
\begin{equation}\label{e:resolvent-alpha}
R(\zeta,\alpha)=\left(H_0-\zeta+\frac{\alpha}{2} T_0\right)^{-1}
=R(\zeta)\left(I_{2n}+\frac{\alpha}{2} T_0 R(\zeta)\right)^{-1},
\end{equation}
where $I_{2n}$ is the identity matrix of order $2n$.
Using Taylor's expansion with respect to $\alpha$ in \eqref{e:resolvent-alpha}, we have
\begin{equation}\label{e:resolvent-expan}
R(\zeta,\alpha)=R(\zeta)\sum_{i=0}^\infty\left(-\frac{\alpha}{2} T_0 R(\zeta)\right)^i\\
=R(\zeta)+\sum_{i=1}^\infty \left(\frac{\alpha}{2}\right)^i (-T_0 R(\zeta))^i.
\end{equation}
By the definition of $T_0$, its matrix norm is less than $1$. Hence
\begin{eqnarray}
\|T_0 R(\zeta)\|\le \|T_0\|\|R(\zeta)\|=\|R(\zeta)\|.\notag
\end{eqnarray}
Then we have that the expansion \eqref{e:resolvent-expan} converges for $\alpha\in [0,2\|R(\zeta)\|^{-1})$.

We now estimate $\|R(\zeta)\|^{-1}$ for $\zeta\in {\rm Im}$. It is well known that
\begin{eqnarray}
\|R(\zeta)\|=\max |\sigma(R(\zeta))|,\quad \zeta\in {\rm Im},
\end{eqnarray}
where $\max |\sigma(R(\zeta))|$ denotes the maximum of the absolute value of the spectrum of $R(\zeta)$.
By the definition of $R(\zeta)$, we have that
\begin{multline}
\max |\sigma(R(\zeta))|=\max |\sigma((H_0-\zeta I)^{-1})|\\
 =(\min |\sigma(H_0-\zeta I)|)^{-1}=({\rm dist}(\sigma(H_0),\zeta))^{-1}.\notag
\end{multline}
Here $\min |\sigma(H_0-\zeta I)|$ denotes the minimum of the absolute value of the spectrum of $H_0-\zeta I$ and ${\rm dist}(\sigma(H_0),\zeta)$ is the distance from $\zeta$ to the spectral set of $H_0$.
It follows that
\begin{equation}
\|R(\zeta)\|^{-1}={\rm dist}(\sigma(H_0),\zeta)\ge \inf_{\zeta\in {\rm Im}}{\rm dist}(\sigma(H_0),\zeta)\\
={\rm dist}(\sigma(H_0),{\rm Im}).
\end{equation}
That is, for all $\zeta\in {\rm Im}$, the expansion \eqref{e:resolvent-expan} converges for all $\alpha \in [0, 2{\rm dist}(\sigma(H_0),{\rm Im}))$.
By the symmetry of the spectrum of $H_0$ with respect to the imaginary axis, ${\rm dist}(\sigma(H_0),{\rm Im})={\rm dist}(\sigma_-(H_0),{\rm Im})$. This completes the proof.
\end{proof}

\begin{remark}
From the proof of Theorem \ref{t:alpha*}, for all $\alpha\in [0, \delta_0)$, $H_{\alpha}$ keeps hyperbolicity. Hence the number of eigenvalues (counting by multiplicity) of $H_{\alpha}$ with negative real parts is $n$. It follows that the dimension of the stable manifold at $(0,0)$ is $n$ and its tangent space is the generalized eigenspace associated to the eigenvalues with negative real parts.
\end{remark}

The algebraic Riccati equation associated to the Hamiltonian matrix $H_{\alpha}$ \eqref{e:H-alpha} is
\begin{equation}\label{e:Riccati3}
2Q+(A-\frac{\alpha}{2} I_n)^TP+P(A-\frac{\alpha}{2} I_n)\\
-\frac{1}{2}PBW^{-1}B^TP=0.
\end{equation}
That is, GARE \eqref{e:Riccati2} with $\gamma=+\infty$.
Then we have the following result.
\begin{lemma}\label{l:P-alpha-infty}
If $\alpha\in [0, \delta_0)$ and conditions $(\bf C_1)-(\bf C_2)$ hold, then the Riccati equation \eqref{e:Riccati3} has a unique stabilizing solution $P_{\alpha,\infty}> 0$.
\end{lemma}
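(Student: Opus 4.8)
The plan is to recognize \eqref{e:Riccati3} as the standard LQR algebraic Riccati equation attached to the Hamiltonian matrix $H_{\alpha}$ in \eqref{e:H-alpha}, and then to obtain existence, uniqueness and positivity from the classical Hamiltonian-matrix theory once its hypotheses have been verified. Writing $\tilde A:=A-\frac{\alpha}{2}I_n$, equation \eqref{e:Riccati3} reads $2Q+\tilde A^{T}P+P\tilde A-\frac12 PBW^{-1}B^{T}P=0$, which is precisely the LQR Riccati equation for the pair $(\tilde A,B)$ with state weight $2Q$ and control weight $2W$; correspondingly
\[
H_{\alpha}=\begin{bmatrix}\tilde A & -\tfrac12 BW^{-1}B^{T}\\[2pt] -2Q & -\tilde A^{T}\end{bmatrix}
\]
is a genuine Hamiltonian matrix, so its stable/unstable eigenvalue dichotomy produces an $n$-dimensional stable (Lagrangian) invariant subspace whose graph yields the stabilizing solution. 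I would invoke exactly the criterion already used for $\alpha=0$ (\cite{sakamoto2008}, \cite[Corollary 2.4.3]{abou2012matrix}), now applied to $H_{\alpha}$ in place of $H_0$.

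First I would verify the two hypotheses of that criterion. Hyperbolicity is already secured: since $\alpha\in[0,\delta_0)$ and $(\mathbf{C_1})$ holds, Lemma \ref{t:alpha*} gives that $H_{\alpha}$ has no imaginary eigenvalue, hence exactly $n$ eigenvalues in the open left half-plane. Next I would check that stabilizability survives the spectral shift: by the Popov--Belevitch--Hautus test, $(\tilde A,B)$ fails to be stabilizable only if $[\mu I_n-\tilde A,\ B]$ loses rank for some $\mu$ with $\operatorname{Re}\mu\ge 0$; but $\mu I_n-\tilde A=(\mu+\tfrac{\alpha}{2})I_n-A$ with $\operatorname{Re}(\mu+\tfrac{\alpha}{2})\ge\tfrac{\alpha}{2}\ge 0$, so such a rank defect would already violate $(\mathbf{C_2})$. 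Thus $(\tilde A,B)$ is stabilizable, and the cited criterion yields a unique symmetric stabilizing solution $P_{\alpha,\infty}\ge 0$ for which $A_{cl}:=\tilde A-\frac12 BW^{-1}B^{T}P_{\alpha,\infty}$ is Hurwitz.

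It then remains to upgrade $P_{\alpha,\infty}\ge 0$ to $P_{\alpha,\infty}>0$, and this is the step where the assumption $Q>0$ (rather than merely $Q\ge 0$) is essential. I would rewrite \eqref{e:Riccati3} in closed-loop form to obtain the Lyapunov identity
\[
A_{cl}^{T}P_{\alpha,\infty}+P_{\alpha,\infty}A_{cl}=-\Bigl(2Q+\tfrac12 P_{\alpha,\infty}BW^{-1}B^{T}P_{\alpha,\infty}\Bigr).
\]
Since $A_{cl}$ is Hurwitz and the right-hand side is $\le -2Q<0$, the unique solution admits the representation
\[
P_{\alpha,\infty}=\int_{0}^{\infty}e^{A_{cl}^{T}t}\Bigl(2Q+\tfrac12 P_{\alpha,\infty}BW^{-1}B^{T}P_{\alpha,\infty}\Bigr)e^{A_{cl}t}\,dt,
\]
whose integrand is positive definite for every $t$ because $Q>0$ and $e^{A_{cl}t}$ is invertible; hence $P_{\alpha,\infty}>0$. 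Equivalently, one may note that $\bigl((2Q)^{1/2},\tilde A\bigr)$ is observable (since $Q>0$ makes $(2Q)^{1/2}$ invertible) and appeal to the observability refinement of the Riccati theorem.

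The genuinely new input beyond the $\alpha=0$ theory is modest: hyperbolicity is delivered by Lemma \ref{t:alpha*}, so the only points requiring care are the spectral-shift argument transferring $(\mathbf{C_2})$ to $(\tilde A,B)$ and the strict-positivity step. I expect the latter to be the main (though still routine) obstacle, as it is exactly where the strict definiteness $Q>0$ must be exploited.
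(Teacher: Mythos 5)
Your proposal is correct and follows essentially the same route as the paper: hyperbolicity of $H_{\alpha}$ from Lemma \ref{t:alpha*}, stabilizability of the shifted pair $(A-\tfrac{\alpha}{2}I_n,B)$, and positive definiteness of $Q$, after which the paper simply cites \cite[Corollaries 2.4.3 and 2.3.7]{abou2012matrix}. Your PBH argument for the spectral shift and your Lyapunov-integral upgrade from $P\ge 0$ to $P>0$ are just explicit versions of what those cited corollaries supply, and both are carried out correctly.
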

\begin{proof}
From Lemma \ref{t:alpha*}, we have that $H_{\alpha}$ is hyperbolic for $\alpha<\delta_0$. Note that since $(A,B)$ is stabilizable, $(A-\frac{\alpha}{2}I_n, B)$ is stabilizable for all $\alpha>0$. Moreover, recalling that $Q$ is positive definite, the result is obtained from \cite[Corollary 2.4.3 and Corollary 2.3.7]{abou2012matrix}.
\end{proof}

From Lemma \ref{l:P-alpha-infty}, we have that $A-\frac{\alpha}{2}I_n+BL_{\alpha,\infty}$ is asymptotically stable, where $L_{\alpha,\infty}=-W^{-1}B^TP_{\alpha,\infty}$. In general, we assume
\begin{eqnarray}\label{e:L}
\mbox{$L$ satisfies that $A-\frac{\alpha}{2}I_n+BL$ is asymptotically stable. }
\end{eqnarray}
Let $$T_L:d\to z=\left[
                  \begin{array}{c}
                    \bar y \\
                    \bar u \\
                  \end{array}
                \right],
$$
where $\bar y, \bar u$ are defined by \eqref{e:detect} and \eqref{e:pre-control}. Define the $H_{\infty}$ norm of $T_L$ as
\begin{eqnarray}
\|T_L\|^2_{H_\infty}:=\sup_{d\ne 0}\frac{\|\bar y\|^2_{L^2}+\|\bar u\|^2_{L^2}}{\|d\|^2_{L^2}}.
\end{eqnarray}
Note that $(Q^{\frac{1}{2}},A)$ is detectable since $Q$ is positive-definite. Then
the following result holds by using \cite[Theorem 8]{van1991state}.
\begin{prop}[\cite{van1991state}]\label{c:sol-riccati}
Suppose that conditions $(\bf C_1)-(\bf C_2)$ hold and $\alpha\in[0,\delta_0)$. Then it follows that
\begin{eqnarray}
\inf_{L\mbox{ satisfying \eqref{e:L}}}\|T_L\|_{H_{\infty}}<\gamma,
\end{eqnarray}
if and only if there exists a symmetric solution $P_{\alpha,\gamma}> 0$ of \eqref{e:Riccati2} such that
\begin{equation}
\sigma\left(A-\frac{\alpha}{2}I_n-BW^{-1}B^TP_{\alpha,\gamma}+\frac{1}{\gamma^2}DG^{-1}D^TP_{\alpha,\gamma}\right)
\subset \mathbb C^-,\notag
\end{equation}
where $\mathbb C^-$ denotes the set of complex numbers with negative real parts.
Moreover, $L=-W^{-1}B^TP_{\alpha,\gamma}$ is one possible $L$ satisfying \eqref{e:L} and $\|T_L\|_{H_{\infty}}<\gamma$.
\end{prop}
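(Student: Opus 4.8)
The plan is to reduce Proposition \ref{c:sol-riccati} to the state-feedback $H_\infty$ characterization of \cite[Theorem 8]{van1991state} by absorbing the discount factor into the system matrix through an exponential change of variables. Fixing the initial time $t$, I would set
$$
\tilde x(s)=e^{-\frac{\alpha}{2}(s-t)}x(s),\qquad \tilde u(s)=e^{-\frac{\alpha}{2}(s-t)}u(s),\qquad \tilde d(s)=e^{-\frac{\alpha}{2}(s-t)}d(s).
$$
A direct computation from \eqref{e:linearized} shows $\dot{\tilde x}=(A-\tfrac{\alpha}{2}I_n)\tilde x+B\tilde u+D\tilde d$, so the discounted linear problem becomes an ordinary (undiscounted) one with system matrix $\tilde A:=A-\tfrac{\alpha}{2}I_n$. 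Since $\int_t^\infty e^{-\alpha(s-t)}|y(s)|^2\,ds=\int_t^\infty|\tilde y(s)|^2\,ds$ for each signal, the map $d\mapsto\tilde d$ is a norm-preserving bijection between $L^2_\alpha[0,\infty)$ and $L^2[0,\infty)$ that intertwines the closed-loop input-output maps (the same $L$ gives $\tilde u=L\tilde x$). Hence the discounted gain $\|T_L\|_{H_\infty}$ equals the \emph{ordinary} $H_\infty$ norm of the transformed closed loop, the strict inequality $<\gamma$ is preserved exactly, and the stabilization requirement \eqref{e:L} becomes precisely that $\tilde A+BL$ is Hurwitz.

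Next I would verify the hypotheses of \cite[Theorem 8]{van1991state} for the transformed data $(\tilde A,B,D,Q^{1/2},W,G)$. Stabilizability of $(\tilde A,B)$ follows from $(\mathbf{C_2})$ by the shift argument already used in the proof of Lemma \ref{l:P-alpha-infty}; detectability of $(Q^{1/2},\tilde A)$ is immediate since $Q>0$ makes $Q^{1/2}$ invertible. The well-posedness of the underlying $\gamma=+\infty$ (pure LQ) problem, which is the standing assumption of van der Schaft's theorem, is supplied by Lemma \ref{l:P-alpha-infty}: for $\alpha\in[0,\delta_0)$ the matrix $H_\alpha$ is hyperbolic (Lemma \ref{t:alpha*}) and the associated Riccati equation has a positive definite stabilizing solution. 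After rescaling the weights $W,G,Q$ into standard normalized form, the $H_\infty$ Riccati equation of \cite{van1991state} for $\tilde A$ coincides with the MARE \eqref{e:Riccati2}, and its stabilizing-solution condition coincides with Hurwitzness of $A-\tfrac{\alpha}{2}I_n-BW^{-1}B^TP_{\alpha,\gamma}+\tfrac{1}{\gamma^2}DG^{-1}D^TP_{\alpha,\gamma}$.

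With these identifications, \cite[Theorem 8]{van1991state} applied to the transformed system yields the stated equivalence: $\inf_L\|T_L\|_{H_\infty}<\gamma$ holds if and only if \eqref{e:Riccati2} admits a symmetric stabilizing solution with the indicated closed-loop matrix in $\mathbb C^-$, and $L=-W^{-1}B^TP_{\alpha,\gamma}$ is an admissible choice realizing $\|T_L\|_{H_\infty}<\gamma$; translating back through the inverse substitution recovers the claim for the original discounted problem. Positive definiteness $P_{\alpha,\gamma}>0$ then follows from $Q>0$ by a standard Lyapunov argument: rewriting \eqref{e:Riccati2} as a Lyapunov equation for the Hurwitz closed-loop matrix with right-hand side bounded below by $2Q$ forces the solution to be positive definite.

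The step I expect to be the main obstacle is the bookkeeping in the second and third paragraphs: matching van der Schaft's normalized $H_\infty$ Riccati equation together with its sign and stability conventions to our weighted MARE \eqref{e:Riccati2}, keeping track of the factor-$\tfrac12$ and factor-$2$ normalizations inherited from the performance function \eqref{e:performance}, and confirming that the transformed problem falls \emph{verbatim} under the hypotheses and conclusion of \cite[Theorem 8]{van1991state}. The exponential substitution and the verification of stabilizability and detectability are routine; the substance lies in this alignment of conventions.
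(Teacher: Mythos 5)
Your proposal is correct and follows essentially the same route as the paper, which simply notes that $(Q^{1/2},A-\tfrac{\alpha}{2}I_n)$ is detectable (since $Q>0$) and $(A-\tfrac{\alpha}{2}I_n,B)$ is stabilizable, and then invokes \cite[Theorem 8]{van1991state} for the shifted system; the exponential substitution absorbing the discount into $A-\tfrac{\alpha}{2}I_n$ is exactly what the paper does implicitly when it poses the stability requirement \eqref{e:L} and the MARE \eqref{e:Riccati2} in terms of the shifted matrix. You merely make explicit the change of variables, the norm-preserving identification of $L^2_{\alpha}$ with $L^2$, and the convention-matching that the paper leaves to the cited reference.
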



\subsection{Nonnegative stabilizing solution of HJI equation}

Recall that GARE \eqref{e:Riccati1} is equivalent to MARE \eqref{e:Riccati2}, and $P_{\alpha,\gamma}=\frac{\partial^2}{\partial x^2}V(0)$ where $V$ is the stabilizing solution of HJI equation \eqref{e:HJI}. Hence to ensure that the stable manifold can be represented by the graph of the value function $V$, we assume that the matrix $H_c(\alpha,\gamma)$ defined by \eqref{e:cHM} keeps hyperbolic for some $\alpha\in [0,\bar \alpha)$, where
$$\bar\alpha=\frac{1}{2}\delta_0={\rm dist}(\sigma_-(H_0),{\rm Im}).$$
Then from Lemma \ref{t:alpha*}, we have the following result.
\begin{theorem}\label{t:stable}
Suppose $\alpha\in [0,\bar\alpha)$ and conditions $(\bf C_1)-(\bf C_2)$ hold. If the problem \eqref{e:system} has finite $L_2$-gain less than $\gamma$, then there exists a neighborhood $U$ of $0\in \mathbb R^{n}$, the stable manifold for contact Hamiltonian system \eqref{e:charact-syst2} at equilibrium $(0,0,0)\in \mathbb R^{n}\times \mathbb R^n\times \mathbb R$ is $n$-dimensional and it can be represented by the graph of some smooth function $(p(x),V(x))$ in $U$.  Accordingly, there exists a smooth stabilizing solution $V\ge 0$ of the discounted HJI equation \eqref{e:HJI} in $U$.
\end{theorem}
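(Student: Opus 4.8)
The plan is to read the statement off the linearization of the $(x,p)$-subsystem of \eqref{e:charact-syst2} at the origin, apply the stable manifold theorem, and then reconstruct $V$ by integration along the manifold. First I would note that the linear part of the first two equations of \eqref{e:charact-syst2} is governed precisely by the matrix $H_c(\alpha,\gamma)$ of \eqref{e:cHM}. Since $\alpha<\bar\alpha=\tfrac12\delta_0<\delta_0$, Proposition \ref{c:sol-riccati} is applicable; the finite $L_2$-gain hypothesis descends to the linearization (its gain is dominated by that of the nonlinear plant \eqref{e:system}), giving $\inf_{L}\|T_L\|_{H_\infty}<\gamma$ under $(\mathbf{C_1})$–$(\mathbf{C_2})$, and hence a symmetric stabilizing solution $P_{\alpha,\gamma}>0$ of the MARE \eqref{e:Riccati2}. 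Equivalently, the Hamiltonian matrix $H(\alpha,\gamma)$ is hyperbolic and its $n$-dimensional stable subspace is the graph $\{p=P_{\alpha,\gamma}x\}$.

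The decisive step is to transfer this to $H_c(\alpha,\gamma)$ itself and to certify the stabilizing property. I would use the identity $H_c(\alpha,\gamma)=H(\alpha,\gamma)+\tfrac{\alpha}{2}I_{2n}$, so that $\sigma(H_c)$ is $\sigma(H(\alpha,\gamma))$ translated rightward by $\tfrac{\alpha}{2}$; in particular the $n$ candidate stable eigenvalues of $H_c$ are the eigenvalues of $A+\big(\tfrac{1}{2\gamma^2}DG^{-1}D^T-\tfrac12 BW^{-1}B^T\big)P_{\alpha,\gamma}$, which is exactly the linearization at $0$ of the closed-loop field $f+(\tfrac{1}{2\gamma^2}kG^{-1}k^T-\tfrac12 gW^{-1}g^T)V_x$. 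It therefore suffices to show that for $\alpha<\bar\alpha$ the stable spectrum of $H(\alpha,\gamma)$ lies strictly to the left of $\{\mathrm{Re}=-\tfrac{\alpha}{2}\}$, so that after translation it remains in $\mathbb{C}^-$ and no eigenvalue reaches the imaginary axis. Concretely, writing $H_c(\alpha,\gamma)$ as a perturbation of its value at $\alpha=0$ by $\alpha\,\mathrm{diag}(0,I_n)$ and running the resolvent expansion of Lemma \ref{t:alpha*}, the perturbation carries the coefficient $\alpha$ rather than the $\tfrac{\alpha}{2}$ appearing for $H_\alpha$, which is exactly why the admissible range contracts from $\delta_0$ to $\bar\alpha=\tfrac12\delta_0$. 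I expect this eigenvalue bookkeeping — keeping simultaneous control of the count of stable directions, of the strict negativity needed for stabilization, and of the dependence on the finite-$\gamma$ perturbation — to be the main obstacle, and the step where the sharpness of $\bar\alpha$ is genuinely used.

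With $H_c(\alpha,\gamma)$ hyperbolic, the smooth local stable manifold theorem produces an $n$-dimensional invariant stable manifold $W^s$ of the $(x,p)$-subsystem at the origin, tangent at $0$ to the stable eigenspace. Because that eigenspace is the graph of the symmetric matrix $P_{\alpha,\gamma}$, the projection $W^s\to\mathbb{R}^n$ onto the $x$-coordinates is a local diffeomorphism, so by the implicit function theorem $W^s$ is the graph of a smooth map $p=p(x)$ on a neighborhood $U$ of $0$, with $p(0)=0$ and $Dp(0)=P_{\alpha,\gamma}$. To reconstruct $V$ I would exploit that the contact flow is conformally symplectic: a direct computation gives $\mathcal{L}_X(dp\wedge dx)=\alpha\,dp\wedge dx$ for the field $X$ defining the first two equations of \eqref{e:charact-syst2}, so $dp\wedge dx$ is merely rescaled along trajectories. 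Pulling back along the flow toward $0$ and using that $dp\wedge dx$ vanishes on the tangent space at the origin (the graph of the symmetric $P_{\alpha,\gamma}$), the exponential decay on $W^s$ forces $dp\wedge dx$ to vanish on all of $W^s$; hence $Dp(x)$ is symmetric and $p^T\,dx$ is exact on $U$. Setting $V(x):=\int_0^x p(\xi)^T\,d\xi$ yields a smooth function with $V_x=p$, $V(0)=0$, and Hessian $P_{\alpha,\gamma}>0$ at the origin, so $V\ge0$ on a possibly smaller neighborhood. Finally, Lemma \ref{l:HC} shows $\bar H$ is constant along the trajectories filling $W^s$ and vanishes at the equilibrium, whence $\bar H(x,V,V_x)\equiv0$, i.e. $V$ solves \eqref{e:HJI}; together with the asymptotic stability of the closed-loop field established above, this exhibits $V$ as the desired nonnegative smooth stabilizing solution on $U$.
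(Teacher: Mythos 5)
Your proposal is correct and follows the same skeleton as the paper's proof (Riccati solution $P_{\alpha,\gamma}$ from Proposition \ref{c:sol-riccati} $\Rightarrow$ stable eigenspace ${\rm span}\bigl[\begin{smallmatrix}I_n\\ P_{\alpha,\gamma}\end{smallmatrix}\bigr]$ $\Rightarrow$ stable manifold theorem $\Rightarrow$ graph $p(x)$ $\Rightarrow$ construct $V$ with $V_{xx}(0)=P_{\alpha,\gamma}>0$), but two of your technical steps are genuinely different from, and more explicit than, what the paper does. First, the paper simply \emph{assumes} that $H_c(\alpha,\gamma)$ stays hyperbolic for $\alpha\in[0,\bar\alpha)$ and asserts that the graph of $P_{\alpha,\gamma}$ is its generalized stable eigenspace; you instead exploit the identity $H_c(\alpha,\gamma)=H(\alpha,\gamma)+\frac{\alpha}{2}I_{2n}$ to track how the $\frac{\alpha}{2}$-translation moves the stable spectrum, which both explains why the admissible range contracts from $\delta_0$ to $\bar\alpha=\frac12\delta_0$ (the perturbation of $H_0$ becomes $\alpha\,{\rm diag}(0,I_n)$ of norm $\alpha$ rather than $\frac{\alpha}{2}T_0$) and makes visible the point the paper glosses over, namely that the restriction $A+RP_{\alpha,\gamma}$ of $H_c$ to the graph subspace must itself be Hurwitz for the stabilizing property, not merely $A-\frac{\alpha}{2}I_n+RP_{\alpha,\gamma}$. (Like the paper, you still leave the finite-$\gamma$ perturbation of $\sigma(H_0)$ at the level of a remark, so neither argument is fully airtight on that point.) Second, where the paper reconstructs $V$ by citing the method of characteristics from Evans, you prove exactness of $p^T\,dx$ on $W^s$ directly via the conformally symplectic identity $\mathcal L_X(dp\wedge dx)=\alpha\,dp\wedge dx$ combined with exponential decay on the stable manifold, and then recover the PDE $\bar H(x,V,V_x)=0$ from the invariance of the contact Hamiltonian (Lemma \ref{l:HC}) and $\bar H(0,0,0)=0$; this is a self-contained Lagrangian-type argument that the paper does not carry out. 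Both routes reach the same conclusion; yours buys a derivation of $\bar\alpha$ rather than an assumption, and an intrinsic construction of $V$, at the cost of more eigenvalue bookkeeping.
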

\begin{proof}
The proof of this theorem is inspired by \cite[Theorem 9]{van1991state}. The main difference is analyzing the discounted factor $\alpha$. From Proposition \ref{c:sol-riccati} and conditions $(\bf C_1)-(\bf C_2)$ hold, for $\alpha\in [0,\bar \alpha)$, there exists a stabilizing solution $P_{\alpha,\gamma}$ of \eqref{e:Riccati2}. That is, ${\rm span}\left[
                                          \begin{array}{c}
                                            I_{n} \\
                                            P_{\alpha,\gamma} \\
                                          \end{array}
                                        \right]
$ is the generalized stable eigenspace of $H_c(\alpha,\gamma)$ \eqref{e:cHM}. Note that $H_c(\alpha,\gamma)$ is the linearized matrix of the first two equations of the contact Hamiltonian system \eqref{e:charact-syst2} and is hyperbolic. Hence by the stable manifold theorem, there exists a smooth stable manifold of the equilibrium $(0,0)\in \mathbb R^{n}\times \mathbb R^{n}$ such that its tangent space is ${\rm span}\left[
                                          \begin{array}{c}
                                            I_{n} \\
                                            P_{\alpha,\gamma} \\
                                          \end{array}
                                        \right]$.
Then in a neighborhood $U$ of $0$, the stable manifold can be represented as the graph of some smooth function $p(x)$ in $U$ with $\frac{\partial p}{\partial x}(0)=P_{\alpha,\gamma}$. From the theory of the characteristic system of first order PDE \cite[Section 3.2]{evans2010partial}, there exists a stabilizing solution $V$ of the HJI equation near the origin that the value of $V$ is determined by the third equation of \eqref{e:charact-syst2} along the characteristic line in the stable manifold, additionally,
$$V(0)=0, \frac{\partial V}{\partial x}(x)=p(x), \frac{\partial V}{\partial x}(0)=p(0),$$
$$\frac{\partial^2 V}{\partial x^2}(0)=\frac{\partial p}{\partial x}(0)=P_{\alpha,\gamma}.$$
Then the positivity of $P_{\alpha,\gamma}$ yields that $V\ge0$ in a neighborhood $U$ of the origin with $V(0)=0$ and $V(x)>0$ in $U\setminus \{0\}$. Consequently, in $U$, there exists a stable manifold of the contact Hamiltonian system \eqref{e:charact-syst2}, which can be represented by the graph of $\{(x,p,V)\in \mathbb R^{2n+1}\,|\,(x,p(x), V(x))\}$. This completes the proof.
\end{proof}

\begin{remark}
The results above indicate that in the vicinity of the equilibrium, the stabilizing solution \( V \) is smooth. However, there may be points in \( \Omega \) where \( V \) is not differentiable. Examining the regularity of \( V \) is beyond the scope of this paper. For simplicity, we will assume that \( V \) is differentiable in the semi-global domain \( \Omega \).
\end{remark}

\begin{remark}
Drawing from the preceding analysis, we derive an approximate bound for \(\alpha\) that is both nearly sufficient and necessary for ensuring the existence of a stabilizing solution for the linearized control system. It is important to note that the bound provided by \cite[Theorem 4]{modares2015h} is solely sufficient and conservative.
\end{remark}

\section{Approximation of optimal controller}\label{s:approx}
Although we prove the existence of graph representation of the stable manifold of the contact Hamiltonian system \eqref{e:charact-syst2} above, computing its analytical formula is still infeasible. Hence, in practice, numerical approximation becomes essentially important.  In this section, we analyze the robustness of the closed-loop system from certain approximate optimal control under some natural conditions.

Assume that for any fixed $\varepsilon>0$, there is an approximation $(\tilde p(x),\tilde V(x))$ of $(p(x),V(x))$ in $\Omega\subset \mathbb R^{n}$ such that
\begin{multline}\label{e:errors}
\sup_{x\in \Omega}\left|(\tilde p (x),\tilde V(x))-(p(x),V(x))\right|<\varepsilon, \\
\left|\frac{\partial \tilde p}{\partial x}(0)-P_{\alpha,\gamma}\right|<\varepsilon,
\mbox{with }(\tilde p(0),\tilde V(0))=(0,0).
\end{multline}
Then the corresponding approximate optimal controller is given by
\begin{eqnarray}\label{e:uNN}
\tilde u(x)=-\frac{1}{2}W^{-1}g^T\tilde p(x).
\end{eqnarray}
\begin{remark}
In \eqref{e:errors}, we assume that the costate $\tilde p$ satisfies that $\tilde p(0)=0$, otherwise, it may yield a finite $L_2$-gain with nonzero bias. Such condition can be satisfied by a trick as in \eqref{e:zero-p} of the algorithm below.
\end{remark}
\begin{remark}
Such kind of approximations satisfying \eqref{e:errors} does exist. For example, deep neural networks, polynomials, Fourier expansion approximations, et al. In the algorithm in Section \ref{s:algorithm} below, we will seek deep neural networks approximations.
\end{remark}

Note that from Theorem \ref{t:stable}, $p(x)=P_{\alpha,\gamma}x+O(|x|^2)$ in $B_{\rho}(0)\subset \Omega$ with some fixed $\rho>0$. Then we have the following estimate.
\begin{lemma}\label{l:u-app}
Assume that $\tilde p(x)$ satisfies \eqref{e:errors}. Then, for $x\in\Omega$,
\begin{eqnarray}\label{e:u-app}
|\tilde u(x)-u^*(x)|\le C_{W,g,\rho}\varepsilon |x|,
\end{eqnarray}
where $C_{W,g,\rho}$ depends on $W,g,\rho$, and $u^*$ is the optimal control \eqref{e:u*d*}.
\end{lemma}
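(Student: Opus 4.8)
The plan is to reduce the claim to an estimate on $\tilde p-p$ and then exploit the fact that both costates vanish at the origin. Since $u^*=-\tfrac12 W^{-1}g^T p$ by \eqref{e:u*d*} (recall $p=V_x^*$) and $\tilde u=-\tfrac12 W^{-1}g^T\tilde p$ by \eqref{e:uNN}, subtracting gives $\tilde u(x)-u^*(x)=-\tfrac12 W^{-1}g(x)^T\big(\tilde p(x)-p(x)\big)$. Taking norms,
\[
|\tilde u(x)-u^*(x)|\le \tfrac12\|W^{-1}\|\Big(\sup_{x\in\Omega}\|g(x)\|\Big)\,|\tilde p(x)-p(x)|,
\]
where $\sup_\Omega\|g\|<\infty$ because $g$ is smooth and the semiglobal $\Omega$ is bounded. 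This isolates the factors $W$ and $g$ in the constant, so it remains to prove $|\tilde p(x)-p(x)|\le C_\rho\,\varepsilon|x|$ on $\Omega$.

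Next I would split $\Omega$ into a far region $\{|x|\ge\rho\}$ and the ball $B_\rho(0)$. On the far region the uniform bound in \eqref{e:errors} does all the work: $|\tilde p(x)-p(x)|<\varepsilon\le \rho^{-1}\varepsilon|x|$. The content of the lemma is therefore the behaviour near the origin, where the plain sup-estimate $\varepsilon$ is far too coarse to produce the extra factor $|x|$.

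For $x\in B_\rho(0)$ I would use Theorem \ref{t:stable}, which gives the expansion $p(x)=P_{\alpha,\gamma}x+O(|x|^2)$, together with the pointwise conditions in \eqref{e:errors}, namely $\tilde p(0)=0=p(0)$ and $|\tfrac{\partial\tilde p}{\partial x}(0)-P_{\alpha,\gamma}|<\varepsilon$. The natural mechanism is the fundamental theorem of calculus along the segment $t\mapsto tx$,
\[
\tilde p(x)-p(x)=\int_0^1\Big(\tfrac{\partial\tilde p}{\partial x}(tx)-\tfrac{\partial p}{\partial x}(tx)\Big)x\,dt,
\]
which would give $|\tilde p(x)-p(x)|\le |x|\,\sup_{B_\rho}|\tfrac{\partial\tilde p}{\partial x}-\tfrac{\partial p}{\partial x}|$ and hence the desired $C_\rho\varepsilon|x|$ once the integrand is controlled by $\varepsilon$.

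I expect this last point to be the main obstacle. The hypotheses in \eqref{e:errors} control the Jacobian of $\tilde p$ only at the single point $x=0$, whereas the integral above needs first-derivative closeness of $\tilde p$ to $p$ along the whole segment. A naive decomposition $\tilde p-p=[\tilde p-\tfrac{\partial\tilde p}{\partial x}(0)x]+[\tfrac{\partial\tilde p}{\partial x}(0)-P_{\alpha,\gamma}]x+[P_{\alpha,\gamma}x-p(x)]$ handles the middle term ($\le\varepsilon|x|$), but leaves the Taylor remainder of $\tilde p$ and the genuine $O(|x|^2)$ nonlinearity of $p$, and neither of these carries an explicit $\varepsilon$ factor: in the range $\varepsilon\lesssim|x|\lesssim\rho$ the term $|P_{\alpha,\gamma}x-p(x)|=O(|x|^2)$ is \emph{not} bounded by $\varepsilon|x|$. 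Consequently, to close the argument cleanly one should upgrade \eqref{e:errors} to a $C^1$-type closeness $\sup_{B_\rho}|\tfrac{\partial\tilde p}{\partial x}-\tfrac{\partial p}{\partial x}|\le C\varepsilon$ --- which is natural for the smooth approximants (e.g.\ the neural networks of Section \ref{s:algorithm} trained to match the manifold and its tangent) --- and then invoke the fundamental theorem of calculus as above. With this control in hand the near-origin and far-region estimates combine to give $|\tilde p-p|\le C_\rho\varepsilon|x|$ on all of $\Omega$, and the displayed reduction finishes the proof.
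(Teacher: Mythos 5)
Your overall route is exactly the paper's: reduce the claim to $|\tilde p(x)-p(x)|\le \bar C_\rho\varepsilon|x|$ via the explicit formulas \eqref{e:u*d*} and \eqref{e:uNN}, then split $\Omega$ into $\Omega\setminus B_\rho(0)$, where the uniform bound $|\tilde p-p|<\varepsilon\le\rho^{-1}\varepsilon|x|$ suffices, and $B_\rho(0)$, where the conditions $\tilde p(0)=0$ and $\bigl|\tfrac{\partial\tilde p}{\partial x}(0)-P_{\alpha,\gamma}\bigr|<\varepsilon$ must carry the weight. The difference is in how the near-origin step is handled: the paper simply asserts, ``from \eqref{e:errors},'' that $|\tilde p(x)-p(x)|\le C_\rho\bigl|\tfrac{\partial\tilde p}{\partial x}(0)-P_{\alpha,\gamma}\bigr|\,|x|$ on $B_\rho(0)$, with no justification beyond the first-order data at the origin. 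The obstacle you identify is genuine and applies to the paper's own argument: the hypotheses in \eqref{e:errors} fix only the value and Jacobian of $\tilde p-p$ at $x=0$, so the Taylor remainders of $\tilde p$ and of $p$ contribute $O(|x|^2)$ terms whose constants are \emph{not} proportional to $\varepsilon$, and for $\varepsilon\lesssim|x|\lesssim\rho$ these are not dominated by $\varepsilon|x|$. Your proposed repair --- upgrading \eqref{e:errors} to a $C^1$-type closeness $\sup_{B_\rho}\bigl|\tfrac{\partial\tilde p}{\partial x}-\tfrac{\partial p}{\partial x}\bigr|\le C\varepsilon$ and integrating along the segment $t\mapsto tx$ --- is the natural way to make the step rigorous, and it is consistent with how the approximants are actually constructed in Section \ref{s:algorithm} (compare also condition (b) of Proposition \ref{t:decay-xnn}, which imposes precisely a bound of the form $|\tilde p(x)-P_{\alpha,\gamma}x|\le\eta|x|$ near the origin). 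In short: same decomposition as the paper, but your version is more careful, and the caveat you raise is a real gap in the published proof rather than a defect of your own argument.
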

\begin{proof}
From \eqref{e:errors}, we have that for $x\in B_{\rho}(0)$,
\begin{eqnarray}\label{e:x-rho}
|\tilde p(x)-p(x)|\le C_{\rho}\left|\frac{\partial \tilde p}{\partial x}(0)-P_{\alpha,\gamma}\right||x| \le C_{\rho}\varepsilon |x|.
\end{eqnarray}
where $C_{\rho}$ is a positive constant depending on $\rho$.
On the other hand, for $x\in \Omega\setminus B_{\rho}(0)$, it holds that
\begin{eqnarray}\label{e:x-rho-omega}
|\tilde p(x)-p(x)|\le \varepsilon \le \frac{1}{\rho}\varepsilon |x|,
\end{eqnarray}
Therefore, by \eqref{e:x-rho} and \eqref{e:x-rho-omega}, we have that
\begin{eqnarray}
|\tilde p(x)-p(x)| \le \bar C_{\rho}\varepsilon |x|,\quad \mbox{for all } x\in \Omega,
\end{eqnarray}
where $\bar C_{\rho}=\max\{C_{\rho},\frac{1}{\rho}\}$.
Finally, from the definition of $\tilde u$ \eqref{e:uNN} and $u^*$ \eqref{e:u*d*}, we have the conclusion.
\end{proof}

Consider the following exact closed-loop system
\begin{eqnarray}\label{e:closed-loop-optimal}
\dot x=f(x)+g(x)u^{*}(x)+k(x)d(t),
\end{eqnarray}
and its approximate system
\begin{eqnarray}\label{e:closed-loop-nn}
\dot { x}=f(x)+g( x) \tilde u(x)+k(x)d(t).
\end{eqnarray}
According to Caratheodory's existence theorem (see e.g. \cite{filippov2013differential}), if \( f(x) \), \( g(x) \), \( \tilde{u}(x) \), \( u^*(x) \), and \( k(x) \) are Lipschitz continuous with respect to \( x \), and if \( d \in L_{\alpha}^2[0, +\infty) \), then both existence and uniqueness of solutions to initial value problem for the equations \eqref{e:closed-loop-optimal} and \eqref{e:closed-loop-nn} are guaranteed.

\begin{prop}\label{t:decay-xnn}
Suppose that conditions $\mathbf{(C_1)-(C_2)}$ hold and let $\alpha\in [0,\bar\alpha)$. Assume that $\tilde p(\cdot)$ satisfies the following conditions:
\begin{enumerate}
    \item[(a)] $|\tilde p(x)-p(x)|<\varepsilon$ for all $x\in \Omega$, where $\varepsilon>0$ is sufficiently small.
    \item[(b)] $|\tilde p(x)-P_{\alpha,\gamma}x|\le \eta|x|$ for $|x|<\rho$, where $\eta>0$ is sufficiently small.
\end{enumerate}
Here, $P_{\alpha,\gamma}$ is the stabilizing solution of the generalized algebraic Riccati equation \eqref{e:Riccati1}, and $\rho>0$ is a fixed constant. Then, for $d(t)\equiv 0$, the trajectory $\tilde x(t)$ of the closed-loop system \eqref{e:closed-loop-nn} generated by the approximation of the stable manifold is sufficiently close to the exact trajectory $x(t)$ of \eqref{e:closed-loop-optimal} with the same initial state $x_0\in \Omega$, and $\tilde x(t)$ decays exponentially as $t\to+\infty$.
\end{prop}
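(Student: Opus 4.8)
The plan is to treat the approximate closed loop \eqref{e:closed-loop-nn} as a small perturbation of the exact closed loop \eqref{e:closed-loop-optimal}, both driven by $d\equiv 0$, and to use the stabilizing solution $V$ of \eqref{e:HJI} as a common Lyapunov function. First I would establish exponential decay for the exact trajectory $x(t)$, and then show that the $O(\varepsilon+\eta)$ discrepancy between $\tilde u$ and $u^{*}$ from Lemma \ref{l:u-app} preserves this decay for $\tilde x(t)$ while keeping $\tilde x$ close to $x$.

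For the exact system the natural step is to differentiate $V$ along \eqref{e:closed-loop-optimal} with $d\equiv 0$, where $u^{*}=-\tfrac12 W^{-1}g^{T}V_x$ by \eqref{e:u*d*}. Substituting $V_x^{T}f$ from the HJI equation \eqref{e:HJI} yields
\[
\frac{d}{dt}V(x(t)) = \alpha V - x^{T}Qx - \tfrac14 V_x^{T}gW^{-1}g^{T}V_x - \tfrac1{4\gamma^{2}}V_x^{T}kG^{-1}k^{T}V_x \le \alpha V - x^{T}Qx .
\]
Here the discount term $\alpha V$ is the obstruction: unlike the undiscounted case of \cite{chen2020deep}, $\dot V$ is not manifestly negative. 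To close the estimate I would use that, by Theorem \ref{t:stable}, $V(0)=0$, $V_x(0)=0$ and $\partial^{2}V/\partial x^{2}(0)=P_{\alpha,\gamma}>0$, so $V(x)$ is comparable to $|x|^{2}$ near the origin; together with $Q>0$ this gives $x^{T}Qx\ge \tfrac{2\lambda_{\min}(Q)}{\lambda_{\max}(P_{\alpha,\gamma})}V$ up to higher-order terms, whence $\dot V\le -cV$ with $c>0$ provided $\alpha$ lies in the admissible range $[0,\bar\alpha)$ (small enough relative to $Q$ and $P_{\alpha,\gamma}$). Equivalently, this is the statement that the linearization $A-\tfrac12 BW^{-1}B^{T}P_{\alpha,\gamma}$ of the $d\equiv 0$ closed loop is Hurwitz, which I would also verify directly from the generalized Riccati equation \eqref{e:Riccati1} under conditions $(\mathbf{C_1})$--$(\mathbf{C_2})$. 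Exponential decay of $V$, hence of $|x(t)|$, then follows.

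For the approximate trajectory I would differentiate $V$ along \eqref{e:closed-loop-nn}. The extra term is $V_x^{T}g(\tilde u-u^{*})$, and by Lemma \ref{l:u-app} (whose hypotheses are supplied by (a)--(b), which control $\tilde p-p$ by $O((\varepsilon+\eta)|x|)$) together with $|V_x|\le C|\tilde x|$ near the origin, it is bounded by $C'(\varepsilon+\eta)|\tilde x|^{2}$. This is absorbed into the negative margin $-cV$ for $\varepsilon,\eta$ sufficiently small, giving $\dot V\le -c'V$ along $\tilde x$ and hence exponential decay of $\tilde x(t)$. For the closeness, I would set $e=\tilde x-x$, write $\dot e = F(\tilde x)-F(x)+g(\tilde x)\bigl(\tilde u(\tilde x)-u^{*}(\tilde x)\bigr)$ with $F=f+gu^{*}$, apply Gr\"onwall on compact time intervals to obtain $\sup_{[0,T]}|e|=O(\varepsilon+\eta)$, and combine with the exponential decay of both trajectories (total stability under the persistent $O((\varepsilon+\eta)|\cdot|)$ perturbation) to conclude $\sup_{t\ge0}|\tilde x(t)-x(t)|\to 0$ as $\varepsilon,\eta\to 0$.

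The main obstacle is precisely the discount term $\alpha V$ in the Lyapunov derivative. Note that \eqref{e:closed-loop-optimal} with $d\equiv 0$ is \emph{not} the stabilizing (worst-disturbance) vector field whose asymptotic stability is built into the definition of a stabilizing solution --- the latter carries the additional term $\tfrac1{2\gamma^{2}}kG^{-1}k^{T}V_x$ --- so decay of the undisturbed system cannot be quoted directly and must be re-derived. It is here that the smallness of $\alpha$ (guaranteed by $\alpha<\bar\alpha$ in the semiglobal regime) does the essential work of letting $x^{T}Qx$ together with the control and disturbance quadratic terms dominate $\alpha V$.
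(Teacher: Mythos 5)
Your overall strategy --- use the stabilizing solution $V$ as a Lyapunov function for the exact closed loop, then absorb the $O((\varepsilon+\eta)|x|)$ control discrepancy of Lemma \ref{l:u-app} and finish with a Gr\"onwall/total-stability argument --- is exactly the route the paper takes: its proof consists of asserting that $f+gu^*$ is asymptotically stable (citing Theorem \ref{t:stable}) and then deferring the perturbation step wholesale to \cite[Theorem 3.2]{chen2020deep}. You have also correctly isolated the one point that is genuinely new in the discounted $H_\infty$ setting: the definition of a stabilizing solution concerns the vector field $f+(\tfrac{1}{2\gamma^2}kG^{-1}k^T-\tfrac12 gW^{-1}g^T)V_x$ (worst disturbance included), and the undiscounted inequality $\dot V\le -x^TQx$ is not available, so stability of $f+gu^*$ with $d\equiv0$ must be re-derived.

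Your resolution of that point, however, is not closed. From the GARE \eqref{e:Riccati1} one gets, with $P=P_{\alpha,\gamma}$,
\[
\Bigl(A-\tfrac12 BW^{-1}B^TP\Bigr)^TP+P\Bigl(A-\tfrac12 BW^{-1}B^TP\Bigr)=\alpha P-2Q-\tfrac12 PBW^{-1}B^TP-\tfrac{1}{2\gamma^2}PDG^{-1}D^TP,
\]
which is negative definite only under a condition of the type $\alpha\,\lambda_{\max}(P_{\alpha,\gamma})<2\lambda_{\min}(Q)$; the eigenvector form of the same computation yields only $\mathrm{Re}\,\sigma(A-\tfrac12 BW^{-1}B^TP_{\alpha,\gamma})<\alpha/2$. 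Neither is implied by $\alpha<\bar\alpha=\mathrm{dist}(\sigma_-(H_0),\mathrm{Im})$, so your parenthetical ``small enough relative to $Q$ and $P_{\alpha,\gamma}$'' is an extra hypothesis, not a consequence of the proposition's assumptions; the same objection applies to your proposed direct verification from the Riccati equation. The mechanism by which $\alpha<\bar\alpha$ actually buys stability is spectral, not Lyapunov-in-metric-$P$: since $H_c(\alpha,\gamma)=H(\alpha,\gamma)+\tfrac{\alpha}{2}I_{2n}$, the closed-loop spectrum is $\sigma_-(H(\alpha,\gamma))+\tfrac{\alpha}{2}$, and the factor $\tfrac12$ in $\bar\alpha=\tfrac12\delta_0$ is chosen precisely so that $\sigma_-(H(\alpha,\gamma))$ stays more than $\alpha/2$ to the left of the imaginary axis. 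Even this argument delivers Hurwitzness of the \emph{full} loop $A-\tfrac12(BW^{-1}B^T-\tfrac{1}{\gamma^2}DG^{-1}D^T)P_{\alpha,\gamma}$; dropping the destabilizing term $\tfrac{1}{2\gamma^2}DG^{-1}D^TP_{\alpha,\gamma}$ to reach your matrix $A-\tfrac12 BW^{-1}B^TP_{\alpha,\gamma}$ is immediate for $n=1$ or $\gamma=\infty$ and standard for $\alpha=0$ via detectability, but for $\alpha>0$ and finite $\gamma$ it still requires an argument. To be fair, the paper's own two-line proof glosses over exactly the same step by quoting Theorem \ref{t:stable}; your write-up at least makes the difficulty visible, but it does not yet close it.
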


\begin{proof}
Since $\alpha\in[0,\bar\alpha)$, from Theorem \ref{t:stable}, $\dot x=f(x)+g(x)u^*(x)$ is stable. Here note that $d\equiv0$ in \eqref{e:closed-loop-optimal} by the assumption. Then a similar proof as in \cite[Theorem 3.2]{chen2020deep} yields the conclusion.
\end{proof}

\begin{theorem}\label{t:finite-l2-gain}
Let $\varepsilon>0$ be sufficiently small constant. Then $\tilde u$ given in \eqref{e:uNN} makes the closed-loop system \eqref{e:system} to have finite $L_2$-gain less than $\gamma+O(\varepsilon)$.
\end{theorem}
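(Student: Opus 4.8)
The plan is to use the exact nonnegative stabilizing solution $V$ of the discounted HJI equation \eqref{e:HJI} as a storage function and to derive a dissipation inequality for the closed-loop system \eqref{e:closed-loop-nn} driven by $\tilde u$. Fix a reference time $t$, an admissible disturbance $d\in L^2_{\alpha}[0,\infty)$, and the initial condition $x(t)=0$, and let $x(\cdot)$ be the resulting trajectory, which I assume remains in $\Omega$ where $V$ is $C^1$ and the approximation \eqref{e:errors} holds. Differentiating along the trajectory gives $\dot V=V_x^T(f+g\tilde u+kd)$, and I will substitute the relation $V_x^Tf-\alpha V+x^TQx=-\tfrac{1}{4\gamma^2}V_x^TkG^{-1}k^TV_x+\tfrac14 V_x^TgW^{-1}g^TV_x$ obtained directly from \eqref{e:HJI}.

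The key algebraic step is to complete the square separately in the control and in the disturbance. Recalling $u^*=-\tfrac12 W^{-1}g^TV_x$ and $d^*=\tfrac{1}{2\gamma^2}G^{-1}k^TV_x$ from \eqref{e:u*d*}, the control terms combine into $(\tilde u-u^*)^TW(\tilde u-u^*)$ and the disturbance terms into $-\gamma^2(d-d^*)^TG(d-d^*)$, yielding
\begin{equation*}
\dot V-\alpha V = -\|z\|^2+\gamma^2 d^TGd+(\tilde u-u^*)^TW(\tilde u-u^*)-\gamma^2(d-d^*)^TG(d-d^*),
\end{equation*}
where $\|z\|^2=x^TQx+\tilde u^TW\tilde u$. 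Dropping the nonpositive last term and multiplying by $e^{-\alpha(s-t)}$ gives $\tfrac{d}{ds}\bigl(e^{-\alpha(s-t)}V(x(s))\bigr)\le e^{-\alpha(s-t)}\bigl(-\|z\|^2+\gamma^2\|d\|^2+(\tilde u-u^*)^TW(\tilde u-u^*)\bigr)$. Integrating from $s=t$ to $s=\infty$ and using $V(x(t))=V(0)=0$ together with $V\ge 0$ (so the boundary term at infinity is nonnegative) produces
\begin{equation*}
\int_t^\infty e^{-\alpha(s-t)}\|z\|^2\,ds\le \gamma^2\int_t^\infty e^{-\alpha(s-t)}\|d\|^2\,ds+\int_t^\infty e^{-\alpha(s-t)}(\tilde u-u^*)^TW(\tilde u-u^*)\,ds.
\end{equation*}

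To close the argument I control the error integral using Lemma \ref{l:u-app}: since $|\tilde u-u^*|\le C_{W,g,\rho}\varepsilon|x|$ on all of $\Omega$, one has $(\tilde u-u^*)^TW(\tilde u-u^*)\le C\varepsilon^2|x|^2$, and because $\|z\|^2\ge x^TQx\ge \lambda_{\min}(Q)|x|^2$ this is bounded pointwise by $C'\varepsilon^2\|z\|^2$. Substituting and absorbing the resulting $C'\varepsilon^2\int e^{-\alpha(s-t)}\|z\|^2\,ds$ into the left-hand side (legitimate once $\varepsilon$ is small enough that $1-C'\varepsilon^2>0$) gives the finite $L_2$-gain inequality with squared gain $\gamma^2/(1-C'\varepsilon^2)=\gamma^2\bigl(1+O(\varepsilon^2)\bigr)$, that is, gain $\gamma+O(\varepsilon^2)$, which is in particular less than $\gamma+O(\varepsilon)$ as claimed.

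The HJI substitution and the two completions of the square are routine, so I expect the main obstacle to be the \emph{uniform}, domain-wide control of the mismatch term: the bound $(\tilde u-u^*)^TW(\tilde u-u^*)\le C'\varepsilon^2\|z\|^2$ must hold for every $x\in\Omega$, not merely near the origin, and this is exactly what the global estimate of Lemma \ref{l:u-app} supplies. A secondary technical point to justify is that the trajectory starting at $x(t)=0$ stays in $\Omega$, so that both the HJI equation and the error bound apply along it, and that $\lim_{s\to\infty}e^{-\alpha(s-t)}V(x(s))$ is well defined and nonnegative; the former can be handled by the invariance/stability argument behind Proposition \ref{t:decay-xnn}, and the latter follows from $V\ge 0$ and the convergence of the integrals on the right.
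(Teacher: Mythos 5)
Your proposal is correct and follows essentially the same route as the paper's proof: both use the exact value function $V^*$ as a storage function, exploit the saddle-point completion of squares $H(V^*,\tilde u,d)=H(V^*,u^*,d^*)+(\tilde u-u^*)^TW(\tilde u-u^*)-\gamma^2(d-d^*)^TG(d-d^*)$, bound the control mismatch by $C\varepsilon^2|x|^2$ via Lemma \ref{l:u-app}, and absorb that term using the positive definiteness of $Q$. The only cosmetic difference is that you absorb the error multiplicatively on the left (yielding $\gamma^2/(1-C'\varepsilon^2)$) while the paper moves it additively to the disturbance side (yielding $\gamma^2+C\varepsilon^2$); these are equivalent, and your explicit treatment of the boundary terms and the initial condition $x(t)=0$ is, if anything, slightly more careful than the paper's.
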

\begin{proof}
Let $\tilde x(t)$ be the solution of \eqref{e:closed-loop-nn} for some given $d\in L^2_{\alpha}[0,\infty)$. From \eqref{e:errors}, it holds that
\begin{eqnarray}
&&H(V^*, \tilde u, d)\\
&=&\tilde x^T Q \tilde x +\tilde u^TR \tilde u-\gamma^2 d^Td -\alpha V^* \notag\\
&&\quad\quad\quad\quad\quad\quad\quad+V^{*T}_x (f+g\tilde u+kd)\notag\\
&=&H(V^*,u^*,d^*)+(\tilde u-u^*)^TR(\tilde u-u^*)\notag\\
&&\quad\quad\quad\quad\quad\quad\quad-\gamma^2(d-d^*)^TG(d-d^*)\notag\\
&\le& C_{W,g,\rho}\varepsilon^2\|x\|^2-\gamma^2(d-d^*)^TG(d-d^*)\notag\\
&\le& C_{W,g,\rho}\varepsilon^2\|x\|^2,\notag
\end{eqnarray}
where $C_{W,g,\rho}$ is a positive constant depending only on $W,g,\rho$ which is given in \eqref{e:u-app}.
Then taking the derivative of $V^*(\tilde x(t))$ with respect to $t$, we obtain that
$$
\frac{d}{dt} (V^*(\tilde x(t)))=V^{*T}_x\left[f(\tilde x)+g(\tilde x)\tilde u(\tilde x)+k(\tilde x)d \right](t).
$$
Compute
\begin{eqnarray}\label{e:eV^*}
&&\frac{d}{dt} (e^{-\alpha t}V^*(\tilde x(t)))\\
&=&e^{-\alpha t}\left[-\alpha V^*(\tilde x(t))\right.\notag\\
&&\quad\quad\left.+V^{*T}_x\left[f(\tilde x)+g(\tilde x)\tilde u(\tilde x)+k(\tilde x)d \right](t)\right]\notag\\
&=&e^{-\alpha t}\left[H(V^*, \tilde u, d)\right.\notag\\
&&\quad\quad\left.-\left[\tilde x^TQ \tilde x +\tilde u^TR \tilde u-\gamma^2 d^TGd\right]\right](t)\notag\\
&\le &e^{-\alpha t}\left[C_{W,g,\rho}\varepsilon^2\|\tilde x\|^2\right.\notag\\
&&\quad\quad\left.-\left[\tilde x^T Q \tilde x +\tilde u^TR \tilde u-\gamma^2 d^TGd\right]\right](t)\notag.
\end{eqnarray}
Integrating both sides of the equation  \eqref{e:eV^*}, we have that for all $T>0$,
\begin{eqnarray}
&&e^{-\alpha T}V^*(\tilde x(T))-V^*(\tilde x(0))\\
&\le&  \int_0^T -e^{-\alpha t}\left[\tilde x^T Q \tilde x +\tilde u^TR \tilde u-\gamma^2 d^TGd\right]dt\notag\\
&&\quad\quad\quad\quad\quad\quad\quad+\int_0^Te^{-\alpha t}C_{W,g,\rho}\varepsilon^2\|\tilde x\|^2dt\notag\\
&\le &\int_0^T -e^{-\alpha t}\left[\tilde x^T (Q-C_{W,g,\rho}\varepsilon^2I_n) \tilde x\right.\notag\\
&&\quad\quad\quad\quad\quad\quad\quad \left.+\tilde u^TR \tilde u-\gamma^2 d^TGd\right] dt.\notag
\end{eqnarray}
Then since $V^*\ge 0$, it follows that
\begin{eqnarray}
&&\int_0^T e^{-\alpha t}\left[\tilde x^T (Q-C_{W,g,\rho}\varepsilon^2I_n) \tilde x +\tilde u^TR \tilde u\right]dt\notag\\
&\le& \int_0^T e^{-\alpha t}[\gamma^2 d^TGd] dt + V^*(\tilde x(0)).
\end{eqnarray}
Recalling that $Q$ is positive-definite, we have that for sufficiently small $\varepsilon>0$,
\begin{eqnarray}
&&\int_0^T e^{-\alpha t}\left[\tilde x^T Q \tilde x +\tilde u^TR \tilde u\right]dt\notag\\
&\le& \int_0^T e^{-\alpha t}[(\gamma^2+C_{W,g,\rho, Q}\varepsilon^2) d^TGd] dt + V^*(\tilde x(0)).\notag
\end{eqnarray}
That is, the approximate feedback control $\tilde u$ given in \eqref{e:uNN} makes the closed-loop system \eqref{e:system} to have finite $L_2$-gain less than $\gamma+O(\varepsilon)$. This completes the proof.
\end{proof}

\section{Algorithm}\label{s:algorithm}

The theoretical results discussed above indicate that if an approximation of the stable manifold meets the conditions outlined in \eqref{e:errors}, then the corresponding approximate optimal controller is capable of stabilizing the system and exhibits a finite \( L_2 \)-gain that is less than \( \gamma + O(\varepsilon) \). In real-world applications, numerous numerical methods exist to compute suitable approximations, including polynomials and neural networks (NNs). In this section, we focus on constructing deep NN approximations for \( (p(x), V(x)) \). Compared to polynomials, deep NNs offer more flexible parameter updates and show great promise in addressing high-dimensional problems, as demonstrated in works such as \cite{han2018solving}, \cite{sirignano2018dgm}, \cite{nakamura2019adaptive}, and other references therein.

\subsection{Construction of the deep NN approximation}

Firstly, we define an NN of certain architecture, $$(p^{NN}_o,V_o^{NN}):=(p^{NN}_o(\theta; x),V_o^{NN}(\theta,x)),$$
whose input $x$ and output $p_o^{NN}$ are $n$-dimensional, and output $V_o^{NN}$ is $1$-dimensional.

In order to find an approximate NN that satisfies conditions in \eqref{e:errors}, the architecture of the original NN should be modified as follows:
\begin{multline}\label{e:zero-p}
(p^{NN}(\theta,x),V^{NN}(\theta,x))=\\
(p^{NN}_o(\theta,x)-p^{NN}_o(\theta,0),V^{NN}_o(\theta,x)-V^{NN}_o(\theta,0)).
\end{multline}
With this modification, $(p^{NN}(\theta,0), V^{NN}(\theta,0))=(0,0)$ which is necessary to satisfy the second inequality of \eqref{e:errors}.

\begin{remark}
The architecture of NN plays a crucial role in applications involving high-dimensional state spaces. For example, the complexity of a deep NN with a binary tree structure, aimed at providing approximations with a certain accuracy, depends linearly on the dimension \( n \). This is supported by \cite[Theorem 2]{poggio2017}. In contrast, the complexity of an NN with only one hidden layer increases exponentially with respect to the dimension \( n \) to achieve the same level of accuracy, as indicated in \cite[Theorem 1]{poggio2017}. In this paper, we focus primarily on control aspects, further investigation into the impact of deep NN architecture is beyond the scope of the present work.
\end{remark}

We now define the loss function to evaluate the error of the approximation stable manifold. We aim to train an NN function $(p^{NN}(\theta,\cdot), V^{NN}(\theta,\cdot))$ to fit a given dataset $\mathcal D=\left\{(x_i,p_i,V_i)\right\}_{i=1}^{|\mathcal D|}$ on $\mathcal M$ and ensure that its derivative at $0\in \mathbb R^n$ is close to $P_{\alpha,\gamma}$. Here $|\mathcal D|$ denotes the number of samples in $\mathcal D$. As in \cite{chen2020deep}, we define the following loss function for $\nu\in [1,\infty]$:
\begin{eqnarray}\label{e:loss}
&&\mathcal L^\nu(\theta;\mathcal D):=\\
&&\sigma_1\left[\frac{1}{|\mathcal D|}\sum_{i=1}^{|\mathcal D|}\|(p_i,V_i)-(p^{NN}(\theta; x_i),V^{NN}(\theta; x_i))\|^\nu\right]\notag\\
&&+\sigma_2\max_{p_i\in \mathcal D}|(p_i,V_i)-(p^{NN}(\theta; x_i),V^{NN}(\theta; x_i))|
+\sigma_3\left\|\frac{\partial p^{NN}}{\partial x}(\theta,0)-P_{\alpha,\gamma}\right\|,\notag
\end{eqnarray}
where $|\cdot|$ denotes the standard Euclidean norm in $\mathbb R^n$, $\|\cdot\|$ is the operator norm of matrix, and $\sigma_i>0$ for $i=1,2,3$ are weight constants.

To seek NN approximation of the stable manifold satisfying estimate \eqref{e:errors} in Section \ref{s:approx}, we make the loss function consist of three terms: the first term controls the mean error with exponent $\nu$ between the NN predicted value $p^{NN}(\theta; x_i)$ and the standard value $p_i$ on dataset $\mathcal D$; the second term enforces a maximum error constraint, ensuring that the maximum difference between the predicted and observed values is small; and the third term ensures that the derivative of the NN function at $0\in \mathbb R^n$ is close to the stabilizing solution $P_{\alpha,\gamma}$ of the Riccati equation \eqref{e:Riccati1}. The weight constants $\sigma_i$ control the relative importance of these terms in the overall loss function. Moreover, it is worth noting that this type of loss function is similar to the one used in \cite{chen2020deep} but distinct from traditional ones, such as the mean square error (MSE) employed in \cite{nakamura2019adaptive}, which only utilizes the first term of the loss function in equation \eqref{e:loss} with $\nu=2$.

\subsection{Trajectories generation}
For the contact Hamiltonian system \eqref{e:charact-syst2}, Theorem \ref{t:stable} establishes the existence of a stable manifold for the equilibrium $(0,0,0)\in \mathbb{R}^n\times\mathbb{R}^n\times\mathbb{R}$. To find trajectories on the stable manifold, we first address a two-point boundary value problem (BVP) in the vicinity of the equilibrium. Subsequently, we employ an initial value problem (IVP) to extend the local trajectory, following the methodology outlined in \cite{chen2020deep}.
Notably, the first two equations of the contact system \eqref{e:charact-syst2} remain independent of $V$, while the determination of the third equation in the system \eqref{e:charact-syst2} hinges on the solution of the first two equations. Therefore, our primary focus in the subsequent discussion will be on the first two equations of \eqref{e:charact-syst2}.

We first solve a two-point BVP in a neighborhood of the origin. For sake of simplicity, set $R=-\frac{1}{2}(BW^{-1}B^T-\frac{1}{\gamma^2}DG^{-1}D^T)$. Let $S$ be the solution of the Lyapunov equation
\begin{eqnarray}
(A-RP_{\alpha,\gamma})S+S(A-RP_{\alpha,\gamma})^T=R.
\end{eqnarray}
As in \cite{sakamoto2008}, setting
$$
T=\left[
    \begin{array}{cc}
      I_{n} & S \\
      P_{\alpha,\gamma} & P_{\alpha,\gamma}S+I_n \\
    \end{array}
  \right],
$$
we obtain that
\begin{equation}
T^{-1}H_{c}(\alpha,\gamma)T=
\left[
                              \begin{array}{cc}
                                A-RP_{\alpha,\gamma} & 0 \\
                                0 & -(A-RP_{\alpha,\gamma}-\alpha I_n)^T \\
                              \end{array}
                            \right].
\end{equation}
Let $B=A-RP_{\alpha,\gamma}$, $F=(A-RP_{\alpha,\gamma}-\alpha I_n)^T$, and $(x,p)^T=T(\bar x,\bar p)^T$. Then the first two equations in system \eqref{e:charact-syst2} becomes
$$
\left\{\begin{array}{l}
  \dot {\bar x}=B\bar x+N_s(\bar x,\bar p),\\
\dot {\bar p}=-F\bar p+N_u(\bar x,\bar p),
\end{array}\right.
$$
where $N_s(\bar x,\bar p)$ and $N_u(\bar x,\bar p)$ are the nonlinear terms.

Next, we consider the following two-point BVP near the equilibrium:
\begin{eqnarray}\label{e:bvp-2}
&&\left\{\begin{array}{l}
  \dot {\bar x}=B\bar x+N_s(\bar x,\bar p)\\
\dot {\bar p}=-F\bar p+N_u(\bar x,\bar p),
\end{array}\right.~~ \mbox{with }\left\{\begin{array}{l}
                                      \bar x(0)=\bar x_0, \\
                                      \bar p(+\infty)=0.
                                    \end{array}\right.
\end{eqnarray}

\begin{prop}
Suppose $|\bar x_0|$ is sufficiently small. Then the two-point BVP \eqref{e:bvp-2} has unique solution.
\end{prop}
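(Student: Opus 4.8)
The plan is to recast the two-point BVP \eqref{e:bvp-2} as a fixed-point problem for an integral operator and solve it by the contraction mapping principle on a small ball, using that the nonlinear terms $N_s,N_u$ are smooth and vanish together with their first derivatives at the origin. The first step is to record the spectral picture produced by the transformation $T$. By Theorem \ref{t:stable} the matrix $H_c(\alpha,\gamma)$ is hyperbolic and $P_{\alpha,\gamma}$ is its stabilizing solution, so the diagonal block $B=A-RP_{\alpha,\gamma}$ is exactly the restriction of $H_c(\alpha,\gamma)$ to its stable eigenspace $\mathrm{span}[I_n;P_{\alpha,\gamma}]$ and is therefore Hurwitz; write $\|e^{Bt}\|\le Me^{-\beta t}$ with $\beta>0$. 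The other block is $F=(A-RP_{\alpha,\gamma}-\alpha I_n)^T=(B-\alpha I_n)^T$, whose spectrum is $\{\lambda-\alpha:\lambda\in\sigma(B)\}$; since $\alpha\ge 0$ and $\mathrm{Re}\,\lambda<0$, the matrix $F$ is also Hurwitz, say $\|e^{Ft}\|\le Me^{-\beta t}$ after enlarging $M$ and shrinking $\beta$. Consequently the $\bar p$-equation is driven by the anti-stable matrix $-F$, so its bounded solution must be recovered by integrating backward from $+\infty$.

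Next I would rewrite \eqref{e:bvp-2} by variation of parameters as the fixed-point equation $(\bar x,\bar p)=\mathcal T(\bar x,\bar p)$ with
\[
\mathcal T_1[\bar x,\bar p](t)=e^{Bt}\bar x_0+\int_0^t e^{B(t-s)}N_s(\bar x(s),\bar p(s))\,ds,
\]
\[
\mathcal T_2[\bar x,\bar p](t)=-\int_t^{\infty}e^{-F(t-s)}N_u(\bar x(s),\bar p(s))\,ds.
\]
Differentiating shows these satisfy the two ODEs, $\mathcal T_1$ meets $\bar x(0)=\bar x_0$, and $\mathcal T_2(t)\to 0$ as $t\to\infty$, so fixed points of $\mathcal T$ coincide precisely with solutions of \eqref{e:bvp-2}. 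I work in the Banach space $X_\epsilon$ of continuous $w:[0,\infty)\to\mathbb R^{2n}$ with $\|w\|_\epsilon:=\sup_{t\ge 0}e^{\epsilon t}|w(t)|<\infty$ for a fixed $0<\epsilon<\beta$, on the closed ball $\bar B_r=\{\,\|w\|_\epsilon\le r\,\}$. The weighted estimates $\|e^{Bt}\bar x_0\|_\epsilon\le M|\bar x_0|$, together with the operator bounds $\frac{M}{\beta-\epsilon}$ and $\frac{M}{\beta+\epsilon}$ for the forward and backward convolutions respectively, combine with the quadratic bounds on $N_s,N_u$ (valid on $\bar B_r$, where their Lipschitz constants are $O(r)$) to yield $\|\mathcal T(w)\|_\epsilon\le M|\bar x_0|+Cr\|w\|_\epsilon$ and $\|\mathcal T(w_1)-\mathcal T(w_2)\|_\epsilon\le Cr\|w_1-w_2\|_\epsilon$. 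Choosing $r$ small so that $Cr\le\tfrac12$, and then $|\bar x_0|\le r/(2M)$ small, makes $\mathcal T$ a contraction of $\bar B_r$ into itself, so the Banach fixed-point theorem gives a unique fixed point, i.e.\ the unique (decaying) solution of \eqref{e:bvp-2}.

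The main obstacle is the unstable $\bar p$-component: unlike a pure stable-manifold ODE it must be solved by the improper backward integral $\mathcal T_2$, and one has to check both that this integral converges in the weighted norm and that $\mathcal T_2$ has operator norm controlled by $O(r)$ uniformly in $t$. This is precisely where the discount factor enters through $F=(B-\alpha I_n)^T$: I would verify that $\alpha\ge 0$ only improves the decay rate of $e^{Ft}$ (equivalently pushes $\sigma(-F)$ further into the right half-plane), so the backward estimate $\|\mathcal T_2(w)\|_\epsilon\le\frac{M}{\beta+\epsilon}\|N_u(w)\|_\epsilon$ remains valid and the overall contraction constant can still be forced below $1$ by shrinking $r$. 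The remaining verifications — that $N_s,N_u$ indeed vanish to second order after the linear change of coordinates $T$, and that the fixed point is the unique solution and not merely unique within $\bar B_r$ — are routine once the spectral gap and the quadratic structure are in hand.
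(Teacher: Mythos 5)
Your argument is correct and is essentially the same as the paper's, which simply defers to \cite[Theorem 5]{sakamoto2008} and \cite[Theorem III.1]{chen2020symplectic} --- both of which establish existence and uniqueness for exactly this kind of two-point BVP via the Lyapunov--Perron integral operator (forward convolution against $e^{Bt}$ for the stable component, backward improper integral against $e^{-F(t-s)}$ for the anti-stable one) and a contraction in a weighted sup-norm ball, with smallness of $|\bar x_0|$ and the quadratic vanishing of $N_s,N_u$ supplying the contraction constant. Your write-up fills in the details the paper leaves to the references, and the remaining points you flag (second-order vanishing of $N_s,N_u$ after the change of coordinates $T$, and upgrading uniqueness from the ball $\bar B_r$ to all decaying solutions) are indeed routine.
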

\begin{proof}
The proof is similar as in \cite[Theorem 5]{sakamoto2008} or \cite[Theorem III.1]{chen2020symplectic}.
\end{proof}
Based on the solution $(\tilde x(t),\tilde p(t))$ of \eqref{e:bvp-2}, we solve the following IVP for $t\in [T_-,0]$ with some $T_-<0$:
\begin{eqnarray}\label{e:ivp}
&&\left\{\begin{array}{l}
  \dot x=f(x)+\left(\frac{1}{2\gamma^2}kk^T-\frac{1}{2}gW^{-1}g^T\right)p\\
\dot p=\alpha p-\frac{\partial f^T(x)}{\partial x}p\\
\quad\quad\quad-p^T\frac{\partial}{\partial x}\left(\frac{1}{4\gamma^2}kk^T-\frac{1}{4}gW^{-1}g^T\right)p-2 Q x,
\end{array}\right.\\
&&\quad\quad\quad\quad\quad\quad\quad \quad\quad\quad\quad\quad\mbox{with }\left\{\begin{array}{l}
                                      x(0)=\tilde x(0), \\
                                      p(0)=\tilde p(0).
                                    \end{array}\right.\notag
\end{eqnarray}

From Lemma \ref{l:HC}, it holds that value of the Hamiltonian is invariant along the solution of the contact Hamiltonian system \eqref{e:charact-syst1}. Therefore, the trajectories from \eqref{e:charact-syst1} lying on the stable manifold keep the Hamiltonian $\bar H\equiv 0$.

\subsection{Algorithm for approximation of stable manifold}

Building on the methodologies proposed in \cite{nakamura2019adaptive} and \cite{chen2020deep}, we employ a deep learning algorithm that leverages adaptive data generation techniques on the stable manifold. Below, we provide a brief overview of the main steps of the algorithm. For a more detailed explanation of the intricacies, please refer to \cite[Subsections 4.B-C]{chen2020deep}.

\emph{Step 0. Transformation of the model.}  We begin by rescaling the characteristic contact system \eqref{e:charact-syst2}. The necessity of such a transformation is explained in \cite[Subsection 4.C.5]{chen2020deep}.

\emph{Step 1. First generation of trajectories and sampling.}
We find a certain number of trajectories on the stable manifold by solving two-point BVP \eqref{e:bvp-2} near the equilibrium and extending the local trajectories by IVP \eqref{e:ivp} as described in \cite[Subsection 4.C.1]{chen2020deep}. Then we pick out some samples on each trajectory to obtain a training set $
\mathcal D_1:=\{(x_i,p_i, V_i)\}_{i=1}^{N_1}
$ as in \cite[Subsection 4.C.2]{chen2020deep}.
Similarly, we also generate a set of samples, $\mathcal D^{\rm val}$, for validation of the trained NN later.

\emph{Step 2. First NN training.}
Next, we train a deep neural network with a specific architecture, denoted as $(p^{NN}(\theta,\cdot),V^{NN}(\theta,\cdot))$ with parameters $\theta$, on dataset $\mathcal D_1$. The goal is to ensure that the network approximates the values $(p_i, V_i)$ for $i=1,\dots, N_1$ after training for a certain number of epochs.
If the neural network's policy function $p^{NN}(\theta,\cdot)$ achieves a performance metric ${\mathcal L^{\nu}}(\theta,\mathcal D_1)<\varepsilon$ for a chosen $\varepsilon$, we then evaluate the network on $\mathcal D^{\rm val}$ to obtain the test error.

\emph{Step 3. Adaptive data generation.} If the test error is unsatisfactory, we proceed by generating additional samples around the data points that exhibited relatively larger errors during the previous round of training. These new samples are then incorporated into the original dataset $\mathcal D_1$, resulting in an expanded dataset $\mathcal D_2$. For further information, please refer to \cite[Subsection 4.C.3]{chen2020deep}.

\emph{Step 4. Model refinement.} Based on the neural network obtained in Step 2, we proceed to further train the NN on the augmented dataset $\mathcal D_2$, following the same procedure as described in Step 2. The training process is halted once the neural network meets the criteria of a Monte Carlo test, as detailed in \cite[Subsection 4.C.4]{chen2020deep}.

\emph{Step 5. Approximate optimal feedback control.} Using the trained $p^{NN}(\theta,\cdot)$, we can derive the approximate optimal feedback control $u^{NN}$ defined in \eqref{e:uNN}. By solving \eqref{e:closed-loop-nn} with a disturbance $d\in L^2_{\alpha}[0,\infty)$, we can compute the closed-loop trajectories starting from specific initial conditions $x(0)=x_0$.

\section{Application to $H_{\infty}$ control of nonlinear parabolic PDEs}\label{s:application}
In this section, we demonstrate the effectiveness of our approach by applying it to the optimal $H_{\infty}$ control of the parabolic Allen-Cahn equation. The equation is given by:
\begin{eqnarray}
&& \mathcal X_t(\xi,t)=\sigma \mathcal X_{\xi\xi}(\xi,t)+\mathcal X(\xi,t)\label{e:parabolic1} \\
&&\quad\quad\quad\quad\quad-\mathcal X(\xi,t)^3+\mathcal U+\mathcal D, \quad \text{in }\mathcal I\times \mathbb R^+,\notag \\
&&\mathcal X(-1,t)=0, \quad \mathcal X(1,t)=0, \quad t\in \mathbb R^+, \label{e:parabolic2} \\
&&\mathcal X(\xi,0)=\mathcal X_0, \quad \xi\in \mathcal I. \label{e:parabolic3}
\end{eqnarray}
Here, $\mathcal I=[-1,1]$, $\mathcal U$ represents the control input, $\mathcal D$ represents the disturbance, and $\sigma>0$ is a viscosity constant. The performance function is defined as:
\begin{eqnarray}\label{e:performance-a}
&&J(u, d) :=
\frac{1}{2}\int_{t}^\infty e^{-\alpha(s-t)}\left[\|\mathcal X(\cdot, s)\|^2_{L^2(\mathcal I)}+\|\mathcal U(\cdot, s)\|_{L^2(\mathcal I)}^2-\gamma^2\|\mathcal D(\cdot, s)\|_{L^2(\mathcal I)}^2\right]ds. \notag
\end{eqnarray}

The Allen-Cahn equation serves as a fundamental model for phase separation phenomena (\cite{du2020phase}). Control of Allen-Cahn equations presents a common challenge in infinite-dimensional systems (\cite{colli2015optimal, chrysafinos2023analysis}). In this scenario, we aim to approximate the Allen-Cahn equation using a high-dimensional system.
The existence and uniqueness of the problem \eqref{e:parabolic1}-\eqref{e:parabolic3} is well known (e.g. \cite[Theorem 4.2]{Karafyllis2019ISS}).

\subsection{Discretization of the spatial interval}

Let $N\ge 3$ be an integer. Set $h=\frac{2}{N}$ and
\begin{eqnarray}\label{e:control-dirichlet1}
\xi_i=-1+\frac{2i}{N},\quad i=0,1,\cdots, N.
\end{eqnarray}
Then from the Dirichlet boundary condition \eqref{e:parabolic2}, it holds that
\begin{eqnarray}\label{e:control-dirichlet2}
\mathcal X(\xi_0, t)=0,\quad \mathcal X(\xi_N,t)=0, \mbox{ for }t\ge 0,
\end{eqnarray}
and for $i=1,\cdots, N-1$, the second order derivative
\begin{eqnarray}\label{e:diff-order2}
\mathcal X_{\xi\xi}(\xi_i)\approx\frac{1}{h^2}(\mathcal X(\xi_{i+1})-2\mathcal X(\xi_i)+\mathcal X(\xi_{i-1})).
\end{eqnarray}
Let the approximate state
$$X(t)=(X_1(t),\cdots, X_{N-1}(t))=(\mathcal X(\xi_1,t),\cdots, \mathcal X(\xi_{N-1},t)).$$
Hence from \eqref{e:control-dirichlet1}, \eqref{e:control-dirichlet2} and \eqref{e:diff-order2}, we get
\begin{eqnarray}
\mathcal X_{\xi\xi}\approx AX:=
\frac{1}{h^2}\left[
     \begin{array}{ccccccc}
       -2 & 1 & 0 & 0 &\cdots & 0 & 0 \\
       1 & -2 & 1 & 0 &\cdots & 0 & 0 \\
       0 & 1 & -2 & 1 &\cdots & 0 & 0 \\
       \vdots & \vdots & \vdots & \vdots &\vdots&\vdots&\vdots \\
       0 & 0 & 0 & 0 &\cdots & 1 & -2 \\
     \end{array}
   \right]\left[
            \begin{array}{c}
              X_1 \\
              X_2 \\
              X_3 \\
              \vdots \\
              X_{N-1} \\
            \end{array}
          \right].\notag
\end{eqnarray}
Accordingly, we set $u=(u_1,u_2,\cdots,u_{N-1})^T=(\mathcal U(\xi_1,t),\cdots, \mathcal U(\xi_{N-1},t))^T$ and $d=(d_1,d_2,\cdots,d_{N-1})^T=(\mathcal D(\xi_1,t),\cdots, \mathcal D(\xi_{N-1},t))$.
Then the control problem \eqref{e:parabolic1}-\eqref{e:parabolic3} becomes a discrete control system
\begin{eqnarray}
\frac{d}{dt}X=\sigma AX+X-X^3+u+d,\quad X(0)=X_0,\notag
\end{eqnarray}
where the terms $X^3=(X_1^3,\cdots, X_{N-1}^3)$, and $X_0=(\mathcal X_0(\xi_1),\, \cdots, \,\mathcal X_0(\xi_{N-1}))$. For simplicity, we set
\begin{eqnarray}
f(X)=(\sigma A+I_{N-1})X-X^3.
\end{eqnarray}
Then the problem \eqref{e:parabolic1}-\eqref{e:parabolic3} becomes
\begin{eqnarray}\label{e:dis-ac}
  \dot X=f(X)+u+d, \,\mbox{ for } t\ge 0, \quad\mbox{ with }
  X(0)=X_0.
\end{eqnarray}
The corresponding performance function of \eqref{e:performance-a} is
\begin{eqnarray}
&&\frac{1}{2}\int_{t}^\infty e^{-\alpha(s-t)}\left[\sum_{i=1}^{N-1}X_i^2(s)h+\sum_{i=1}^{N-1}u_i^2(s)h-\gamma^2\sum_{i=1}^{N-1}d_i^2(s)h\right]ds\notag\\
&=&\int_{t}^\infty e^{-\alpha(s-t)}\left[\frac{1}{N}\|X(s)\|^2+\frac{1}{N}\|u(s)\|^2-\frac{\gamma^2}{N}\|d(s)\|^2\right]ds,\notag
\end{eqnarray}
where $\|\cdot\|$ denotes the standard Euclidean norm in $\mathbb R^{N-1}$.
Let $Q=hI_{N-1}$, $R=hI_{N-1}$, $W=hI_{N-1}$.
Then the discounted HJI equation is
\begin{eqnarray}\label{e:HJI-e}
V^{T}_Xf+\frac{1}{4h}V^{T}_X\left(\frac{1}{\gamma^2}-1\right)V_X-\alpha V+ hX^T X=0
\end{eqnarray}
and the optimal control is
\begin{eqnarray}\label{e:control-f}
u^*(X)=-\frac{1}{2h}V_X.
\end{eqnarray}
Therefore the associated contact Hamiltonian system is
\begin{eqnarray}\label{e:c-ham-syst}
\left\{\begin{array}{l}
  \dot X=f(X)+\frac{1}{2h}\left(\frac{1}{\gamma^2}-1\right)P,\\
 \dot P=\alpha P-\frac{\partial f^T(X)}{\partial X}P-2 h X,\\
 \dot V=f(X)^TP+\frac{1}{2h}\left(\frac{1}{\gamma^2}-1\right)P^TP.
\end{array}\right.
\end{eqnarray}
Here the term
\begin{equation}
\frac{\partial f^T(X)}{\partial X}P=\frac{\partial }{\partial X}\left[(\sigma A+I_{N-1})X-X^3\right]^TP\\
=(\sigma A^T+I_{N-1})P-3X^2\circ P.\notag
\end{equation}
The linearized matrix of the first two equations of \eqref{e:c-ham-syst} at $(0,0)\in\mathbb R^{2n}$ is
\begin{eqnarray}
H(\alpha,\gamma)=\left[
    \begin{array}{cc}
      \sigma A+ I_{N-1} & \frac{1}{2h}\left(\frac{1}{\gamma^2}-1\right)I_{N-1} \\
      -2hI_{N-1} & -\sigma A^T-(1-\alpha)I_{N-1} \\
    \end{array}
  \right].
\end{eqnarray}
From Theorem \ref{t:stable}, for
$
0\le \alpha<\bar\alpha={\rm dist}(\sigma_-(H_0),{\rm Im}),
$
the GARE \eqref{e:Riccati1} has positive definite symmetric solution, and the HJI equation \eqref{e:HJI-e} has nonnegative solution $V$. Then the optimal control follows from \eqref{e:control-f}.

\subsection{Numerical experiment}
We are now ready to give a numerical experiment to demonstrate the effectiveness of our approach.

To begin, we construct a deep neural network based on a neural network architecture introduced by \cite[Section 4.2]{sirignano2018dgm}, which bears resemblance to a Long Short-Term Memory (LSTM) network (as in \cite{hochreiter1997long}). The deep neural network, denoted as $P^{NN}(\theta, \cdot)$, consists of 3 hidden layers, with each sub-layer comprising 60 units.

We define the activation function $\chi: \mathbb R^{60} \to \mathbb R^{60}$ as $\chi(z) = (\sin(z_1), \cdots, \sin(z_{60}))$. By employing the $\sin(\cdot)$ function as the activation function, the neural network exhibits similarities to the Fourier series. Additionally, the derivative of $\sin(\cdot)$, namely $\cos(\cdot)$, possesses broader global support compared to derivatives of standard activation functions like the sigmoid function.

In the chosen loss function \eqref{e:loss}, we set $\sigma_1 = 1$ and $\sigma_2 = \sigma_3 = 0.01$. For optimization purposes, we utilize the Adam optimizer from PyTorch.

\subsubsection{Details of the computation procedure}
We set the viscosity constant in the system \eqref{e:parabolic1} to be $\sigma=0.1$, and in the discretization, we choose $N=31$.
In the upcoming simulations, we primarily focus on two scenarios:
- Case I: $\gamma=1.2$, $\alpha=0.5\bar \alpha$.
- Case II: $\gamma=+\infty$, $\alpha=0.5\bar \alpha$.
It's worth noting that in Case II, when $\gamma=+\infty$, the original optimal $H_{\infty}$ control problem transforms into an optimal control problem. Consequently, the HJI equation \eqref{e:HJI-e} becomes a Hamilton-Jacobi-Bellman (HJB) equation.

In both cases, we generate 1500 trajectories on the stable manifold. Specifically, we select 1500 points on the sphere $\partial B_{0.8}(0)$ uniformly at random and solve the two-point BVP \eqref{e:bvp-2}. Subsequently, we extend these local trajectories by solving the IVP \eqref{e:ivp} with $T_-=0.015$. From each trajectory, we randomly pick 26 samples, including 22 samples on trajectories with positive time and 4 samples on trajectories with negative time. This process yields a training dataset comprising 39000 samples.

Additionally, we generate 500 more trajectories, resulting in 13000 samples on the stable manifold, using the same method employed for generating the testing dataset.

The neural network undergoes training on the training dataset for 4000 epochs, with a learning rate update strategy of ${\rm lr}=10^{-3}\times \left(\frac{1}{2}\right)^{[j/1500]}$, where $j$ denotes the epoch and $[j/1500]$ represents the largest integer less than or equal to $j/1500$. The training of the deep neural network is carried out on a standard laptop (Thinkpad T480s) without GPU acceleration, requiring approximately 75 minutes to complete the training procedure.

Here are the additional details of the numerical procedures for the two cases:

Case I: $\gamma=1.2$, $\alpha=0.5\bar \alpha$:
The distance from the spectrum of the Hessian matrix $H_0$ to the imaginary axis is approximately $0.553$. Therefore, we have $\bar\alpha\approx 0.553$. The distance from the spectrum of the Hessian matrix $H_{\alpha}$ to the imaginary axis is around 0.277.
By choosing $T>0$ such that $\exp(-0.277 T)\le 10^{-5}$, we find that $T_{\min}\approx 41.0$. For our simulations, we set $T_{\infty}=41$. Following the training of the neural network on the dataset, we achieve a training error of $1.7\times 10^{-3}$ and a testing error of $3.5\times 10^{-3}$.

Case II: $\gamma=+\infty$, $\alpha=0.5\bar \alpha$:
In this scenario, $\bar\alpha\approx 1.00$.  The distance from the spectrum of the Hessian matrix $H_{\alpha}$ to the imaginary axis is about $0.500$, then $T_{\min}\approx23.3$ with the same error tolerance as Case I. Setting $T_{\infty}=25$, we achieve a training error of $1.0\times 10^{-3}$ and a testing error of $1.6\times 10^{-3}$.

After the training process, we then utilize the trained neural networks to generate feedback controls using \eqref{e:uNN} in various scenarios.

\subsubsection{Simulations based on the trained NN}
We give some implementations of the trained NN. Firstly, we apply the trained NNs to control system \eqref{e:dis-ac} with disturbance $d(x,t)=(\sin t,\cdots, \sin t)$ at randomly chosen initial state. The trajectories show that the robust control (the case $\gamma=1.2$) is much better than the traditional optimal control (the case $\gamma=+\infty$).
\begin{figure}[t]
\vspace{-0.3cm}
\begin{center}
\subfigure{
\includegraphics[width=0.48\textwidth]{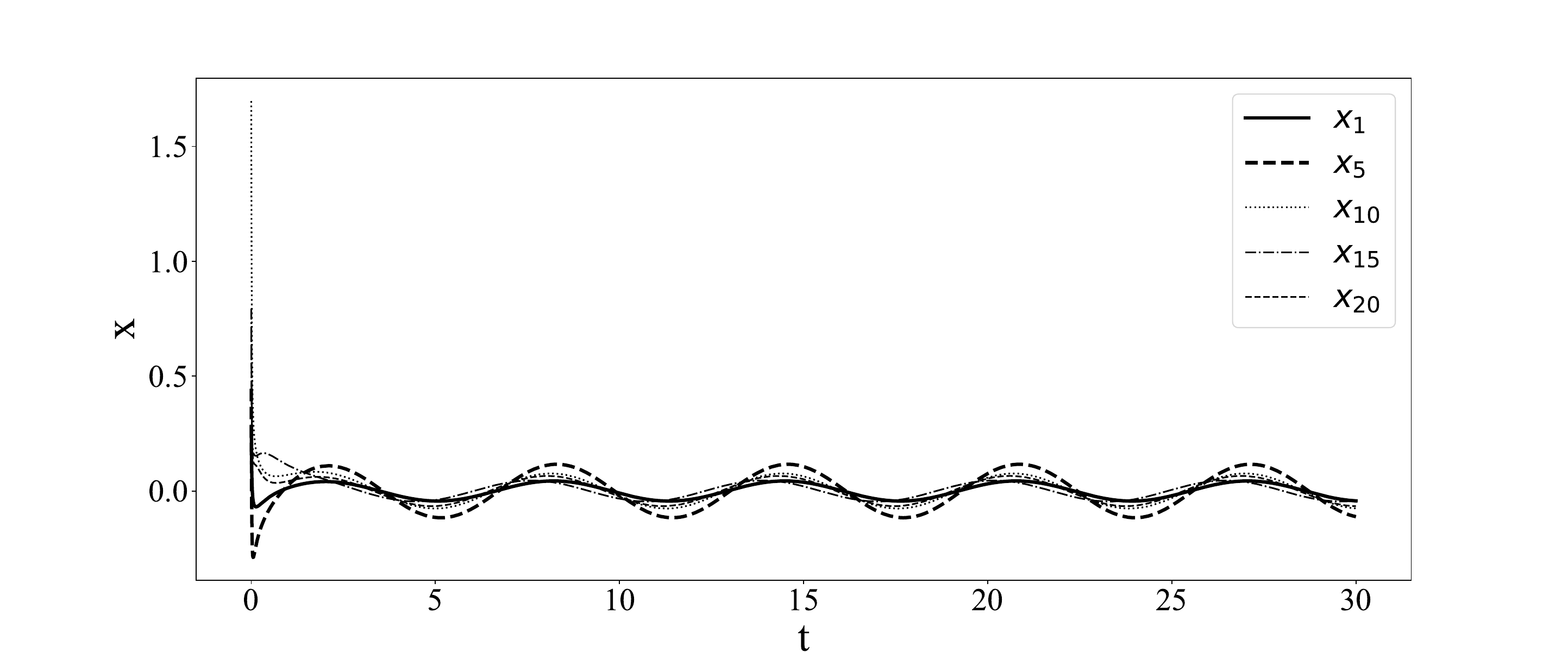}}
\subfigure{
\includegraphics[width=0.48\textwidth]{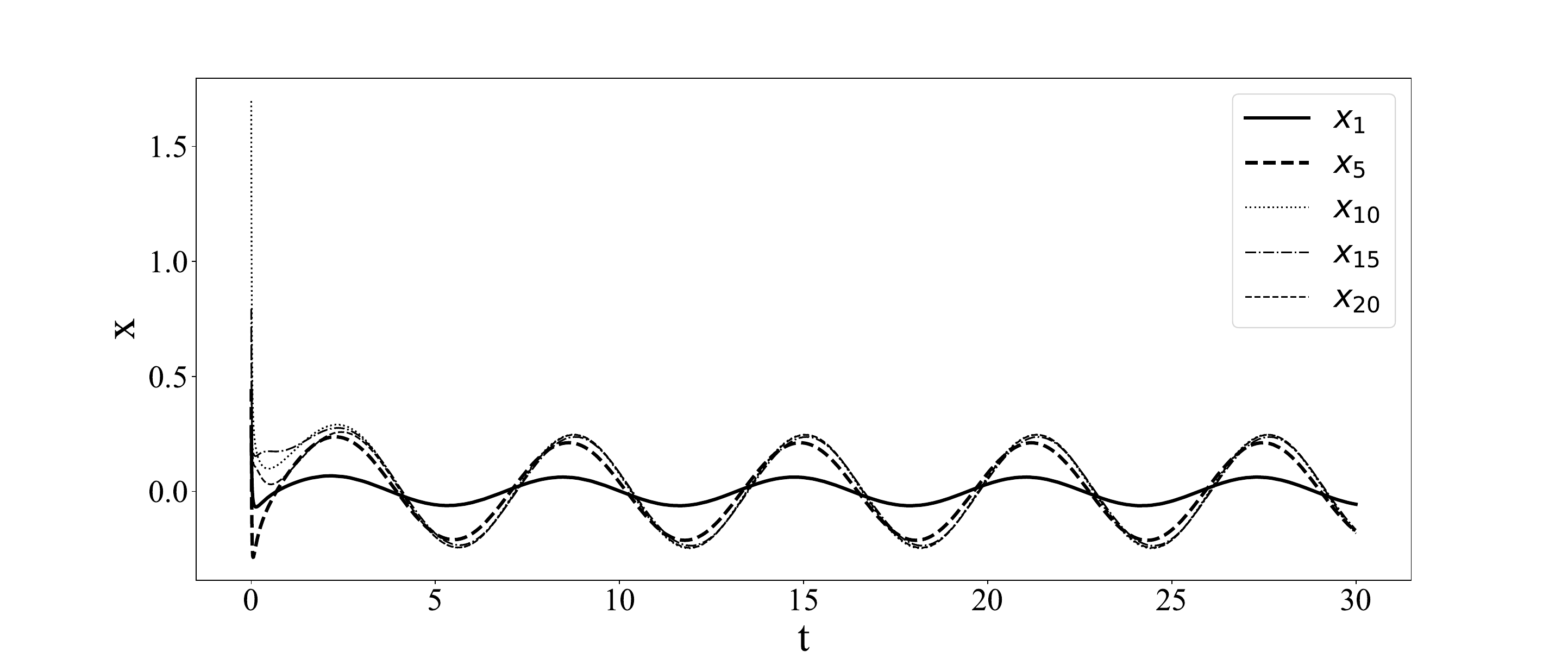}}
\end{center}
\hfill
\vspace{-0.5cm}
\caption{ The dynamics of the NN controlled system with $d(x,t)=0.3\sin t$. The first one is the trajectory from $\gamma=1.2$, and the second one is from $\gamma=+\infty$. }
 \label{f:x1}
 \vspace{-0.1cm}
\end{figure}
Furthermore,  the corresponding $|u|$ and $|w|$ are given as in Figure \ref{f:|u||w|}.
\begin{figure}[htbp]
\vspace{-0.3cm}
\begin{center}
\subfigure{
\includegraphics[width=0.48\textwidth]{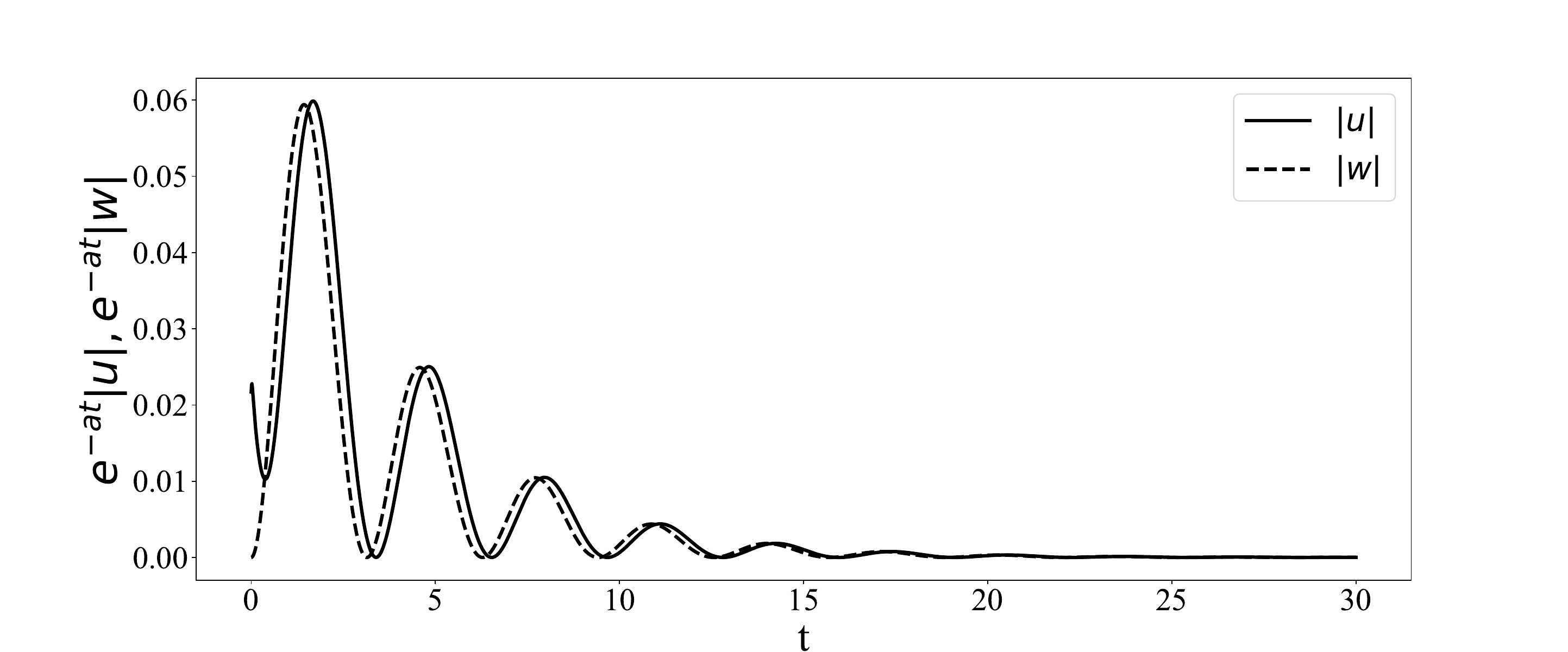}}
\subfigure{
\includegraphics[width=0.48\textwidth]{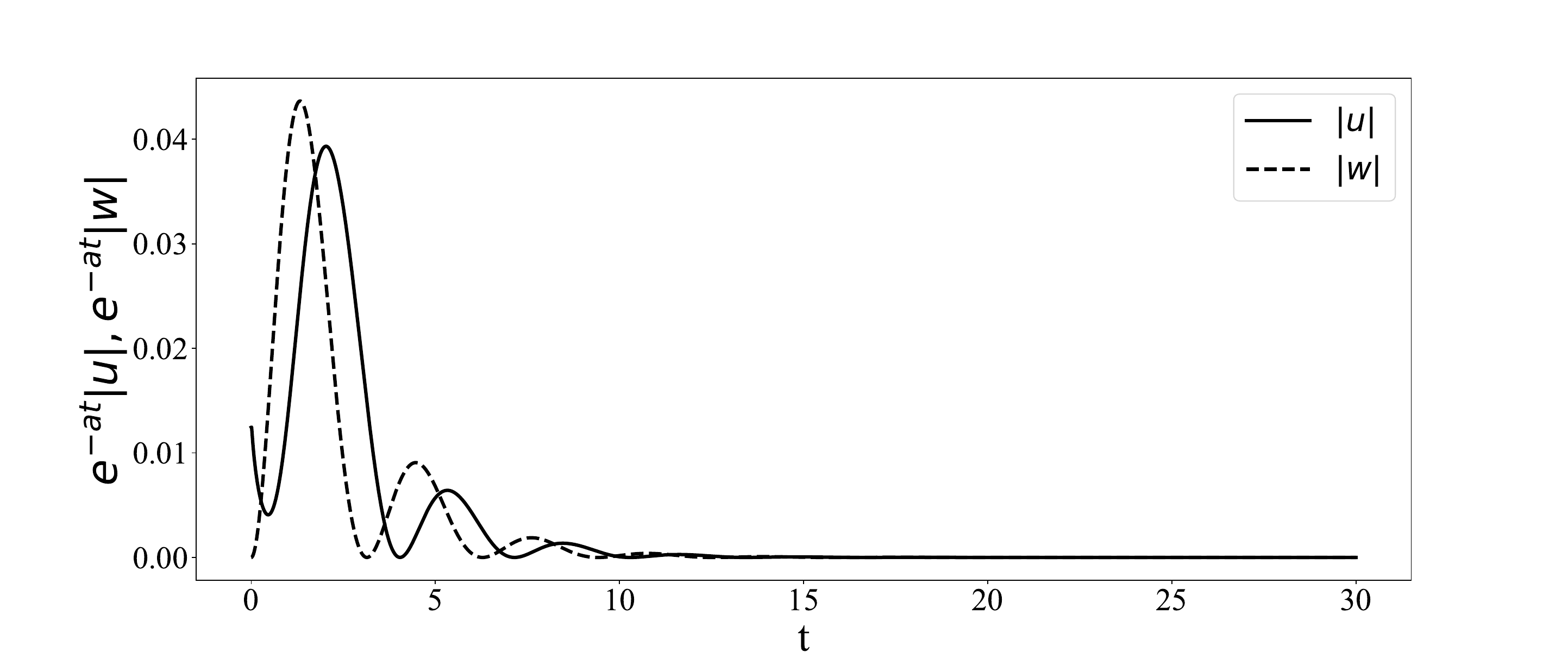}}
\end{center}
\hfill
\vspace{-0.5cm}
\caption{ The norm of $u$ and $w$ with $w(x,t)=0.3\sin t$. The first one (resp. second one) gives the norms or the case $\gamma=1.2$ (resp. $\gamma=+\infty$). }
 \label{f:|u||w|}
 \vspace{-0.1cm}
\end{figure}

For the second numerical application of the NN-generated feedback controller, we consider a tracking problem with parameters $\gamma=1.2$ and $\alpha=0.5\bar\alpha$ as a practical example. Let $r(x,t)$, for $(x,t)\in [-1,1]\times[0,30]$, represent any reference trajectory. Discretizing $x$ as previously mentioned with $N=31$, we update the states and reference states $r(x,t)$ at a frequency of 500 Hz. To elaborate further, let $s_0=1/500$ and denote $t_k = ks_0$ for $k=0,1,2,\cdots$. We define the difference $w_k(x,t) = r(x,t) - r(x,ks_0)$ as a disturbance signal, where $t\in [ks_0,(k+1)s_0)$, and $k=0,1,2,3,\cdots$.

Let $X_0$ represent the initial state, and $r_0=r(x,0)$. For each $k=0,1,2,\cdots$, we consider the following robust control systems:
\begin{multline}
\dot Y = f(Y)+u + w_k, \quad \text{for } t\in [ks_0,(k+1)s_0), \\
\text{ with }Y(t_k) = X_k - r(x,t_k).\notag
\end{multline}

In this setup, we adjust the relative state $Y$, which signifies the distance between the state and the reference states, at the update time $t_k$. Utilizing the above scheme, the NN-generated control ensures that the states $X$ effectively track the reference signal $r(x,t)$, depicted in Figure \ref{f:tracking}.

\begin{figure}[t] 
\vspace{-0.3cm}
\begin{center}
\subfigure{
\includegraphics[width=0.48\textwidth]{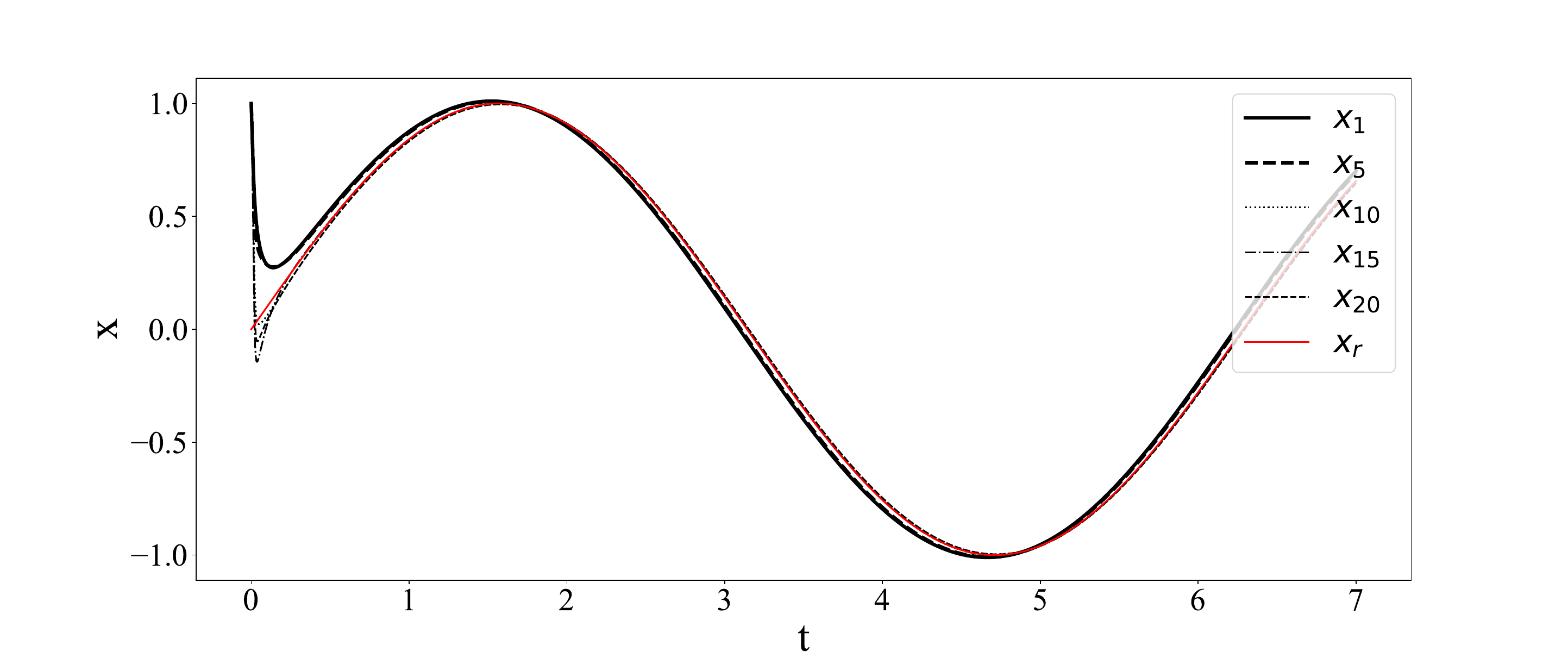}}
\end{center}
\hfill
\vspace{-0.5cm}
\caption{Tracking trajectory of $r(x,t)=\sin t$. }
 \label{f:tracking}
 \vspace{-0.1cm}
\end{figure}

\section{Conclusion}
In this paper, the discounted Hamilton-Jacobi-Isaacs (HJI) equation, which is derived from the optimal $H_{\infty}$ control on infinite time horizon with a performance function including discount term $e^{-\alpha t}$ ($\alpha>0$), has been investigated. Several fundamental issues, including theoretical analysis and appropriate algorithm, have been discussed. Specifically, a sharp estimate for the discount factor $\alpha$ has been given to ensure the existence of nonnegative stabilizing solution to the discounted HJI equation as well as the stable manifold of the contact Hamiltonian system of the discounted HJI equation. In a semiglobal domain containing the equilibrium, the stable manifold can be represented by the graph of  gradient of the stabilizing solution of the discounted HJI equation. This yields the optimal feedback control directly in this domain. Then we prove that for approximation of the stable manifold sufficiently close to the exact one in natural sense, the closed-loop system has a finite $L_2$-gain which is sufficiently close to the original one. Based on the theoretical results, we propose an algorithm to seek deep NN approximation of the stable manifold of the contact Hamiltonian system of the discounted HJI equation in a semiglobal domain containing the equilibrium. Finally, an application to optimal $H_{\infty}$ control of the parabolic Allen-Cahn equation is given to show the effectiveness of the method.

Some related problems may warrant further investigation. For instance, it would be interesting to develop a method for the general Hamilton-Jacobi equation of the form \( H(x, V, \nabla V) = 0 \), whose characteristic system is a contact Hamiltonian system. The analytical techniques used to assess the effectiveness of approximate optimal control in the present paper could potentially be generalized to other types of control problems, such as stochastic control problems. Regarding the algorithm, since we are considering the stable manifold of a contact Hamiltonian system, it is natural to develop an algorithm based on the contact structure, as suggested in \cite{chen2020symplectic}, which utilizes the symplectic structure. Moreover, our approach can be applied to high-dimensional nonlinear control problems in practice.



\end{document}